\newcommand{\reals}{\mathbb{R}}
\newcommand{\exponent}{\operatorname{e}}
\newcommand{\interior}{\operatorname{int}}
\newtheorem{stel}{Theorem}
\newtheorem{gevolg}{Corollary}
\newtheorem{lemma}{Lemma}
\theoremstyle{remark}
\begin{document}

\title{Stability of diffusively coupled linear systems with an invariant cone}

\author{Patrick De Leenheer\footnote{Department of Mathematics and Department of Integrative Biology, Oregon State University, Supported in part by NSF-DMS-1411853, deleenhp@math.oregonstate.edu}}

\date{}

\maketitle

\begin{abstract}
This paper concerns a question that frequently occurs in various applications: Is any diffusive coupling of stable linear systems, also stable? Although it has been known for a long time that this is not the case, we shall identify a reasonably diverse class of systems for which it is true.
\end{abstract}

{\bf Keywords}: linear systems, monotone systems, diffusive coupling, asymptotic stability.

{\bf MSC2010}: 15B48, 93D05, 93D30.

\section{Introduction}
The main motivation for this paper comes from the following question. Consider a coupled linear system:
\begin{eqnarray*}
{\dot x}&=&Ax+D(y-x)\\
{\dot y}&=&By+D(x-y),
\end{eqnarray*}
where $x$ and $y$ are in $\reals^n$, $A$ and $B$ are real $n\times n$ matrices, while $D$ is an arbitrary diagonal matrix with non-negative diagonal entries. 
In mathematical biology, these systems frequently occur when linearizing diffusively coupled patched nonlinear systems at their steady states. 
The coupling terms $D(y-x)$ and $D(x-y)$ are referred to as diffusive coupling terms. 
This stems from their analogy to Fick's law for diffusion which posits that the spatial flux of a species is proportional to the gradient of the density of the species, and oriented from regions of higher density to regions of lower density.

The aforementioned question is this: If the zero steady state of the uncoupled system (i.e. when $D=0$) is asymptotically stable, does
the steady state remain asymptotically stable for all possible matrices $D$?
It has long been known that the answer to this question is negative. For instance, assume that $A=B$. If we define two new variables $z_1$ and $z_2$ in $\reals^n$:
\begin{eqnarray*}
z_1&=&\frac{1}{2}(x+y)\\
{\dot z}_2&=&\frac{1}{2}(x-y),
\end{eqnarray*}
then the dynamics in these new variables is given by:
\begin{eqnarray*}
{\dot z}_1&=&Az_1\\
{\dot z}_2&=&(A-2D)z_2
\end{eqnarray*}
Suppose that
$$
A=B=\begin{pmatrix}-2& -3 \\ 1& 1 \end{pmatrix},\textrm{ and } D=\begin{pmatrix} 1&0\\0&d \end{pmatrix},
$$
with $d\geq 0$. Then the eigenvalues of $A=B$ have negative real part (because the trace of $A$ is negative, and its determinant is positive), but 
$$
A-2D=\begin{pmatrix}
-4& -3\\
1&1-2d
\end{pmatrix},
$$
whose determinant is negative when $0<d<1/8$. Thus, although the zero steady state of the uncoupled system is asymptotically stable, it is unstable for the coupled system when $d$ lies in this range. 

Despite yielding a negative answer to the original question, this potential destabilization phenomenon has spurred a lot of interesting subsequent work. It features in synchronization theory \cite{hale}, and 
also underlies mechanisms that induce pattern formation, as noted by Turing in 1952 in \cite{turing}. At the time this was seen as a revolutionary idea, especially in biology, because diffusion was believed to always have a stabilizing effect on biological systems. The example above shows that this is not always the case.

Instead of further exploring the consequences when destabilization occurs, one can try to restrict the classes of matrices to which 
$A$ and $B$ belong to guarantee that the question can be answered affirmatively. We shall identify particular classes of matrices for which this is indeed the case. 

\section{Preliminaries}
Throughout this paper, $C\subseteq \reals^n$ will represent a proper cone, i.e. a
non-empty, closed, convex, solid and pointed cone. More precisely, $C$ is a cone ($\alpha x \in C$ for all $\alpha \geq 0$ when $x\in C$) which is solid (i.e. its interior, $\interior(C)$, is not empty) and pointed (i.e. if both $x\in C$ and $-x\in C$, then $x=0$), and it is a closed and convex subset of $\reals^n$. 

Let $K\subseteq \reals^n$ be a non-empty convex cone. We say that $K$ is finitely generated if there exists vectors $k_1,k_2,\dots, k_p$ in $\reals^n$ (called the generators of $K$) 
such that 
$$
K=\{k\in \reals^n\, | \, k=\sum_{i=1}^p \alpha_i k_i \textrm{ for some } \alpha_i\geq 0\}.
$$
It is known, see e.g. \cite{borwein}, that a finitely generated cone in $\reals^n$ is a polyhedral set, i.e. the intersection of finitely many closed half-spaces in $\reals^n$ (A closed half-space in $\reals^n$ is a set of the form $\{x\in \reals^n \, | \, <v,x>\geq a \}$ for some nonzero vector $v$ and real number $a$, where $<.,.>$ denotes the standard inner product on $\reals^n$).
Therefore, every finitely generated cone is necessarily closed, a statement which is not immediately clear from its definition.

{\bf Examples}: The non-negative orthant cone $\reals^n_+$ is a proper, finitely generated cone in $\reals^n$ with the standard basis vectors $e_1,\dots, e_n$ of $\reals^n$ serving as its generators. An example of a proper cone in $\reals^n$ with $n>1$ that is not finitely generated is the Lorenz cone:
$$
\{x\in \reals^n\, | \,(x_1^2+\dots+x_{n-1}^2)^{1/2} \leq x_n\},
$$
also known as the ice cream cone. This terminology is obviously motivated by its appearance when $n=3$. As a final example, first consider ${\cal S}_n$, the set of real, symmetric $n\times n$ matrices, which can be identified with 
$\reals^{n(n+1)/2}$. Then the subset  ${\cal P}_n$ of ${\cal S}_n$ consisting of all positive semi-definite matrices is a proper cone in ${\cal S}_n$ see e.g. \cite{boyd}.  The interior of ${\cal }P_n$ consists of the positive definite matrices, and ${\cal P}_n$ is not finitely generated.

To every convex cone $K$ in $\reals^n$ -finitely generated or not- is associated 
the dual cone $K^*$, defined as the set of linear functionals on $\reals^n$ which are non-negative on $K$. Linear functionals on $\reals^n$ are elements of the dual space of $\reals^n$, which is denoted as $(\reals^n)^*$, and assuming that $\reals^n$ is equipped with the standard inner product $<.,.>$, the Riesz Representation Theorem implies that 
every linear functional $\lambda\in (\reals^n)^*$ can be identified with a unique vector $v$ in $\reals^n$ in the sense that $\lambda(x)=<v,x>$, for all $x\in \reals^n$. It follows that 
$K^*=\{\lambda \in (\reals^n)^*\, |\, \lambda(x)\geq 0\textrm{ for all }x\in K\}\equiv \{v\in \reals^n\, |\, <v,x> \geq 0\textrm{ for all } x\in K\}$ is a non-empty closed convex cone. 

{\bf Examples}: The three cones mentioned in the examples above, namely the orthant cone, the ice cream cone, and the cone of positive semi-definite matrices are self-dual, i.e. each coincides with its dual cone, see \cite{boyd}.

We collect further well-known facts concerning cones \cite{berman,smith-mini-review,boyd}:
\begin{lemma} \label{basics}
Let $K\subseteq \reals^n$ be a closed convex cone. Then:
\begin{enumerate}
\item $K$ is pointed if and only if $K^*$ is solid.
\item  $\interior(K)=\{x \in K \, |\, \lambda(x)>0\textrm{ for all } \lambda \in K^*\setminus \{0\}\}$.
\item  $\interior(K^*)=\{\lambda \in K^*\, |\, \lambda(x)>0\textrm{ for all } x\in K\setminus \{0\}\}$.
\end{enumerate}
\end{lemma}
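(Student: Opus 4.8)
The plan is to derive all three statements from two standard tools of finite-dimensional convex analysis: the separating and supporting hyperplane theorems, and the bipolar identity $K^{**}=K$, which holds for every closed convex cone $K\subseteq\reals^n$ once $(\reals^n)^*$ is identified with $\reals^n$ via the standard inner product (so that $K^{**}$ again sits inside $\reals^n$). I would first record the bipolar identity, since parts 1 and 3 use it. The inclusion $K\subseteq K^{**}$ is immediate from the definitions. For the reverse inclusion, given $x\notin K$ I would strictly separate $x$ from the closed convex set $K$ by some $v\in\reals^n$; the cone structure then forces $v\in K^*$ and $\langle v,x\rangle<0$, so $x\notin K^{**}$. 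Concretely: separation at the point $0\in K$ gives $\langle v,x\rangle<0$, and if $\langle v,y\rangle<0$ for some $y\in K$ then $\langle v,\alpha y\rangle\to-\infty$ as $\alpha\to\infty$ while $\alpha y\in K$, contradicting that $\langle v,\cdot\rangle$ is bounded below on $K$; hence $\langle v,y\rangle\ge0$ for all $y\in K$, i.e. $v\in K^*$. Granting this, part 3 is nothing but part 2 applied to the closed convex cone $K^*$: rewrite $K^{**}=K$ and use the Riesz identification $\mu(\lambda)=\langle x,\lambda\rangle=\lambda(x)$ to translate ``for all $\mu\in K^{**}\setminus\{0\}$'' into ``for all $x\in K\setminus\{0\}$''. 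So the real work is in parts 1 and 2.

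For part 1 I would argue both implications by contraposition, using the elementary fact that a convex subset of $\reals^n$ with empty interior lies in a hyperplane. If $K$ is not pointed, pick $x\ne0$ with $x\in K$ and $-x\in K$; then every $\lambda\in K^*$ satisfies $\lambda(x)\ge0$ and $\lambda(-x)\ge0$, hence $\lambda(x)=0$, so $K^*\subseteq\{v:\langle v,x\rangle=0\}$ and $K^*$ is not solid. Conversely, if $K^*$ is not solid, then $K^*\subseteq\{v:\langle v,x\rangle=0\}$ for some $x\ne0$; thus $\langle v,x\rangle\ge0$ and $\langle v,-x\rangle\ge0$ for all $v\in K^*$, so by the bipolar identity both $x$ and $-x$ lie in $K^{**}=K$, and $K$ is not pointed.

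For part 2, the inclusion ``$\subseteq$'' is direct: if $x\in\interior(K)$ and $\lambda\in K^*\setminus\{0\}$, choose $z$ with $\lambda(z)<0$; for all small $\varepsilon>0$ we have $x+\varepsilon z\in K$, so $0\le\lambda(x+\varepsilon z)=\lambda(x)+\varepsilon\lambda(z)$, giving $\lambda(x)\ge-\varepsilon\lambda(z)>0$. For ``$\supseteq$'', suppose $x\in K$ has $\lambda(x)>0$ for all $\lambda\in K^*\setminus\{0\}$ but $x\notin\interior(K)$. Then $x$ is a boundary point of the closed set $K$, and a supporting hyperplane at $x$ yields a nonzero $v$ with $\langle v,y\rangle\ge\langle v,x\rangle$ for all $y\in K$. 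Testing $y=0$ gives $\langle v,x\rangle\le0$, testing $y=2x$ gives $\langle v,x\rangle\ge0$, so $\langle v,x\rangle=0$; and for any $y\in K$ and $\alpha>0$, $\alpha y\in K$ forces $\alpha\langle v,y\rangle\ge0$, hence $v\in K^*\setminus\{0\}$ with $\langle v,x\rangle=0$ --- contradicting the hypothesis. Therefore $x\in\interior(K)$.

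I do not expect a genuine obstacle here; the ``hard'' part is simply assembling the right off-the-shelf results (strict separation, supporting hyperplanes, ``empty interior $\Rightarrow$ contained in a hyperplane'', and the bipolar theorem) and handling the cone structure cleanly in the scaling steps. It is worth one remark on degeneracy: if $K$ is not solid, both sides of (2) are empty --- the right-hand side because the unit normal $v$ to a hyperplane containing $K$ belongs to $K^*\setminus\{0\}$ and annihilates all of $K$ --- so (2) still holds, and likewise (3) holds when $K^*$ fails to be solid, which by part 1 is exactly when $K$ is not pointed. For the proper cone $C$ used in this paper these degeneracies never occur, but including the remark keeps the lemma self-contained.
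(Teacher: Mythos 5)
The paper does not actually prove this lemma: it is presented as a collection of ``well-known facts'' with citations to Berman--Neumann--Stern, Hirsch--Smith, and Boyd--Vandenberghe, so there is no in-paper argument to compare yours against. Judged on its own, your proof is correct and complete. The logical architecture is sound: the bipolar identity $K^{**}=K$ is established by strict separation (with the scaling argument correctly forcing the separating functional into $K^*$), part 1 follows by contraposition in both directions using the fact that a convex set with empty interior lies in a hyperplane, part 2 is handled by the standard perturbation argument for ``$\subseteq$'' and a supporting hyperplane at a boundary point for ``$\supseteq$'', and part 3 is correctly reduced to part 2 applied to the closed convex cone $K^*$ via the bipolar identity and the Riesz identification. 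Your closing remark about the degenerate cases (non-solid $K$, non-pointed $K$) is a worthwhile sanity check that keeps the statement honest for general closed convex cones, even though the paper only ever applies the lemma to proper cones and their duals, where those degeneracies are excluded. The only spot where one could ask for a half-sentence more is in the converse of part 1: the hyperplane containing a non-solid $K^*$ is a priori affine, and you should note that it passes through the origin because $K^*$ is a cone containing $0$ (so the constant term must vanish); this is immediate but worth saying since you then use the homogeneous form $\{v:\langle v,x\rangle=0\}$ to conclude $\pm x\in K^{**}$.
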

We shall need a few more properties about cones. Let $K_1\subseteq \reals^n$ and $K_2\subseteq \reals^n$ be convex cones. The set $K_1+K_2=\{x\in \reals^n\, |\, x=x_1+x_2,\, x_1\in K_1,\, x_2\in K_2\}$ is a convex cone, containing both $K_1$ and $K_2$. For any $X\subseteq \reals^n$, its reflection with respect to the origin is defined as $\{-x\, | \, x\in X\}$, and will be denoted as $-X$.
\begin{lemma} \label{dual}
Let $K_1$ and $K_2$ be convex cones in $\reals^n$. Then
\begin{enumerate}
\item $K_1+K_2$ is pointed if and only if $K_1$ and $K_2$ are pointed, and $K_1\cap (-K_2)=\{0\}$.
\item $(K_1+K_2)^*=K_1^*\cap K_2^*$.
\end{enumerate}
\begin{proof}
\begin{enumerate}
\item Assume that $K_1+K_2$ is pointed. Then so are $K_1$ and $K_2$ since they are subsets of $K_1+K_2$. Let $x\in K_1\cap (-K_2)$. Then $x\in K_1$ and $-x\in K_2$, and thus 
$x\in K_1+K_2$. But $K_1+K_2$ is pointed, and thus $x=0$. 

Assume that $K_1$ and $K_2$ are pointed, and $K_1\cap (-K_2)=\{0\}$. Let $x\in K_1+K_2$, such that 
$-x\in K_1+K_2$ as well. Then there exist $k_1, {\tilde k_1}$ in $K_1$, and $k_2,{\tilde k_2}$ in $K_2$ such that:
$$
x=k_1+k_2\textrm{ and } -x={\tilde k_1}+{\tilde k_2},
$$
and thus that
$$
k_1+{\tilde k_1}=-(k_2+{\tilde k_2})
$$
But $K_1\cap (-K_2)=\{0\}$, and thus $k_1+{\tilde k_1}=0=k_2+{\tilde k_2}$. Then $k_1$ and $-k_1$ belong to $K_1$, and $k_2$ and $-k_2$ belong to $K_2$. As $K_1$ and $K_2$ are pointed, this implies that $k_1={\tilde k_1}=k_2={\tilde k_2}=0$, and then also $x=0$, establishing that $K_1+K_2$ is pointed.
\item
If $\lambda \in (K_1+K_2)^*$, then $\lambda(x)\geq 0$ for all $x\in K_1+K_2$. Then $\lambda(x)\geq 0$ for all $x$ in $K_1$, and for all $x$ in $K_2$, and therefore 
$\lambda\in K_1^*\cap K_2^*$. Conversely, assume that $\lambda \in K_1^*\cap K_2^*$, hence $\lambda(x)\geq 0$ for all $x\in K_1$ and for all $x\in K_2$. This implies that 
$\lambda(x)\geq 0$ for all $x$ in $K_1+K_2$, and thus $\lambda \in (K_1+K_2)^*$.
\end{enumerate}
\end{proof}
\end{lemma}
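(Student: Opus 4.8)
The plan is to prove both parts directly from the definitions of pointed cone and dual cone, using only that a nonempty convex cone contains the origin and is closed under addition and nonnegative scaling, together with the resulting inclusions $K_1 \subseteq K_1+K_2$ and $K_2 \subseteq K_1+K_2$ (obtained by writing $k_1 = k_1 + 0$ with $0 \in K_2$, and symmetrically).

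For the first part, the forward implication is quick: every subcone of a pointed cone is pointed, so pointedness of $K_1+K_2$ is inherited by $K_1$ and by $K_2$; moreover, if $x \in K_1 \cap (-K_2)$ then both $x$ and $-x$ lie in $K_1+K_2$, forcing $x=0$. For the converse I would start from an $x$ with $x, -x \in K_1+K_2$, write $x = k_1 + k_2$ and $-x = \tilde k_1 + \tilde k_2$ with $k_i, \tilde k_i \in K_i$, and add the two equalities to obtain $k_1 + \tilde k_1 = -(k_2 + \tilde k_2)$. The left-hand side lies in $K_1$ and the right-hand side in $-K_2$, so the hypothesis $K_1 \cap (-K_2) = \{0\}$ forces $k_1 + \tilde k_1 = 0 = k_2 + \tilde k_2$; pointedness of $K_1$ and of $K_2$ then yields $k_1 = \tilde k_1 = k_2 = \tilde k_2 = 0$, hence $x = 0$.

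For the second part, the inclusion $(K_1+K_2)^* \subseteq K_1^* \cap K_2^*$ follows by restricting any functional that is nonnegative on $K_1+K_2$ to the subcones $K_1$ and $K_2$. The reverse inclusion uses linearity of the functional: if $\lambda$ is nonnegative on both $K_1$ and $K_2$, then $\lambda(x_1 + x_2) = \lambda(x_1) + \lambda(x_2) \geq 0$ for every $x_1 \in K_1$ and $x_2 \in K_2$, so $\lambda \in (K_1+K_2)^*$.

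I do not expect a substantial obstacle. The only mildly delicate step is the converse in part 1, where one must recognize that the identity $k_1 + \tilde k_1 = -(k_2 + \tilde k_2)$ exhibits a common element of $K_1$ and of $-K_2$, which is precisely what lets the hypothesis $K_1 \cap (-K_2) = \{0\}$ be applied; everything else is routine bookkeeping with the cone axioms and linearity.
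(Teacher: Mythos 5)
Your proposal is correct and follows essentially the same route as the paper's proof: the same subcone inclusions for the forward direction of part 1, the same decomposition $x=k_1+k_2$, $-x=\tilde k_1+\tilde k_2$ and the identity $k_1+\tilde k_1=-(k_2+\tilde k_2)$ for the converse, and the same two-inclusion argument for the dual cone identity. No gaps.
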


For vector spaces $V$ and $W$ we denote the set of linear maps from V to $W$ as ${\cal L}(V,W)$; but when $V=W$ we denote ${\cal L}(V,V)$ as ${\cal L}(V)$. For any subset 
$X\subseteq V$, and $T\in {\cal L}(V,W)$, we denote the image of $X$ under $T$ as $T(X)=\{w\in W\, | \, w=Tx\textrm{ for some } x\in X\}$. 
 
The image of a nonempty closed convex cone under a linear map is easily seen to be a nonempty convex cone, but it need not be closed: 

{\bf Example}:  Let $K_1$ be the ice cream cone in $\reals^3$, $K_1=\{x\in \reals^3 \,| \, (x_1^2+x_2^2)^{1/2}
\leq x_3\}$, and $K_2$ be the finitely generated cone in $\reals^3$ with a single generator $\begin{pmatrix}0 \\ 1 \\ -1\end{pmatrix}$. Then $K_1\times K_2$ is a closed convex cone in 
$\reals^6=\reals^3\times \reals^3$. Let $T\in {\cal L}(\reals^6, \reals^3)$ be defined by $T(x_1,x_2)=x_1+x_2$ for all $(x_1,x_2)\in \reals^3 \times \reals^3$. Note that for all $\epsilon >0$:
\begin{eqnarray*}
\begin{pmatrix}
1\\ 0 \\ \epsilon
\end{pmatrix}=&\begin{pmatrix}1\\ -\frac{1}{\epsilon} \\ \frac{1}{\epsilon}+\epsilon \end{pmatrix}+&\frac{1}{\epsilon}\begin{pmatrix}0\\ 1\\ -1 \end{pmatrix}\in T(K_1\times K_2),\\
&\in K_1& \; \; \in K_2,
\end{eqnarray*}
but
$$
\begin{pmatrix}1\\0\\0 \end{pmatrix} \notin T(K_1\times K_2),
$$
and thus $T(K_1\times K_2)=K_1+K_2$ is not closed. This example therefore also shows that the sum of two closed convex cones in $\reals^n$ need not be closed. Notice that 
$\textrm{Ker}(T)\cap \left( K_1\times K_2 \right) \neq \{0\}$ because it contains the vector $\begin{pmatrix}\begin{pmatrix}0\\ -1 \\ 1 \end{pmatrix}, \begin{pmatrix} 0\\ 1\\ -1\end{pmatrix} \end{pmatrix}$. 

Below is a sufficient condition guaranteeing that the image of a closed convex cone under a linear map is closed. This condition is clearly violated in the example above. 
Further results about this problem can be found in \cite{borwein}.
\begin{lemma}\label{closed}
Let $K$ be a non-empty closed convex cone in $\reals^n$, and $T\in {\cal L}(\reals^n,\reals^m)$. If 
\begin{equation}\label{ker-condition}
\textrm{Ker}(T)\cap K =\{0\},
\end{equation}
then $T(K)$ is a non-empty closed convex cone in $\reals^m$.
\end{lemma}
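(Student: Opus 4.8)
The plan is to dispose of the structural claims first and then focus on closedness, which is the only substantive point. Since $K$ is non-empty it contains $0$ (take $\alpha=0$ in the cone axiom), so $0=T0\in T(K)$ and $T(K)$ is non-empty; and linearity of $T$, via $T(\alpha x)=\alpha Tx$ and $T(\tfrac12(x+x'))=\tfrac12(Tx+Tx')$, immediately transfers the cone property and convexity from $K$ to $T(K)$. I would state these in a line or two and move on.

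For closedness, I would take a sequence $y_k=Tx_k$ with $x_k\in K$ and $y_k\to y$ in $\reals^m$, and show $y\in T(K)$. The delicate point is that the preimages $x_k$ need not be bounded, so one cannot just extract a convergent subsequence. I would therefore distinguish according to the behaviour of $\|x_k\|$, using that every real sequence either has a bounded subsequence or tends to $+\infty$. If some subsequence $x_{k_j}$ stays bounded, then by Bolzano--Weierstrass a further subsequence converges to some $x\in\reals^n$, and $x\in K$ since $K$ is closed; continuity of $T$ gives $Tx=\lim Tx_{k_j}=y$, so $y\in T(K)$, as desired.

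It remains to rule out the case $\|x_k\|\to\infty$. Here I would normalize: set $u_k=x_k/\|x_k\|$, which lies in $K$ because $K$ is a cone and has $\|u_k\|=1$. By compactness of the unit sphere, a subsequence $u_{k_j}$ converges to a unit vector $u$, and $u\in K$ again by closedness of $K$. On the other hand $Tu_{k_j}=y_{k_j}/\|x_{k_j}\|\to 0$, since the numerator is bounded (being convergent) while the denominator diverges; hence $Tu=0$ by continuity. Then $u\in\mathrm{Ker}(T)\cap K$ with $u\neq 0$, contradicting \eqref{ker-condition}. Thus this case cannot occur, only the bounded case is possible, and $T(K)$ is closed.

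The main obstacle is precisely the possible unboundedness of the preimage sequence, and the hypothesis \eqref{ker-condition} is exactly what is needed: after rescaling to the unit sphere it forbids the runaway behaviour, which is why this condition is the right one. (One could alternatively phrase the argument without cases by first passing to a subsequence along which $\|x_k\|$ either converges in $[0,\infty)$ or diverges, but the two-case formulation above seems cleanest.)
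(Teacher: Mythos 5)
Your proof is correct, and it shares the essential ingredients with the paper's argument: normalizing the preimages to the compact set $S^{n-1}\cap K$ and invoking the hypothesis $\mathrm{Ker}(T)\cap K=\{0\}$ to guarantee that $T$ does not vanish on the limiting unit vector. The organization differs, though. You split into two cases according to whether the preimage sequence $x_k$ has a bounded subsequence, handle the bounded case by Bolzano--Weierstrass directly, and use the kernel condition only to rule out the case $\|x_k\|\to\infty$ by contradiction. The paper instead gives a single uniform argument: after disposing of the trivial limit $y=0$, it writes $y_k/\|y_k\|=T(u_k)/\|T(u_k)\|$ with $u_k=x_k/\|x_k\|$, extracts a convergent subsequence $u_{k_j}\to e\in S^{n-1}\cap K$, notes $\|Te\|>0$ by the kernel condition, and passes to the limit to exhibit the preimage explicitly as $y=\frac{\|y\|}{\|Te\|}Te$ with $\frac{\|y\|}{\|Te\|}e\in K$. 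Your dichotomy version is the more standard shape for such closedness arguments and makes transparent exactly where (and only where) the hypothesis is needed; the paper's version avoids the case split at the cost of needing the small separate case $y=0$ and the algebraic identity for the normalized images. Either is a complete proof.
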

\begin{proof}
That $T(K)$ is a non-empty convex cone is obvious. To see that it is closed, consider a sequence $x_j$ in $T(K)$ such that $x_j\to x$ for some $x\in \reals^m$. We need to show that $x\in T(K)$. If $x=0$ the result is clear because then $0=T0$ belongs to $T(K)$. So we assume that $x\neq 0$, and therefore, for all sufficiently large $j$, holds that $||x_j|| \neq 0$.
Moreover, there exists a sequence $k_j$ in $K$ such that $x_j=Tk_j$ for all $j$. Then for all sufficiently large $j$ holds:
\begin{equation}\label{limit-seq}
\frac{1}{||x_j||}x_j=\frac{1}{||Tk_j||} Tk_j= \frac{1}{||T(k_j/||k_j||)||} T(k_j/||k_j||) \, \to \, \frac{1}{||x||}x
\end{equation}
As $e_j=k_j/||k_j||$ belongs to $S^{n-1}\cap K$ for all large $j$, where $S^{n-1}=\{x\in \reals^n \,| \, ||x||=1\}$ denotes the unit sphere in $\reals^n$, and 
$S^{n-1}\cap K$ is compact, we can extract a converging subsequence also denoted by $e_j$, with limit $e$ in $S^{n-1}\cap K$. By $(\ref{ker-condition})$ follows that 
$||Te||>0$, and by passing through the limit in  $(\ref{limit-seq})$, that
$$
x=\frac{||x||}{||Te||}Te \in T(K).
$$ 
\end{proof}

The image of a finitely generated cone in $\reals^n$ under a continuous linear map is also finitely generated, hence a polyhedral set, and thus closed:
\begin{lemma}\cite{borwein} \label{closed2}
Let $K$ be a finitely generated nonempty convex cone in $\reals^n$, and $T\in {\cal L}(\reals^n,\reals^m)$. Then 
$T(K)$ is a non-empty closed convex cone in $\reals^m$.
\end{lemma}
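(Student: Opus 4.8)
The plan is to show directly that the images of the generators of $K$ serve as generators of $T(K)$, and then to invoke the fact, already recalled in the Preliminaries, that every finitely generated cone is a polyhedral set and hence closed.

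First I would fix generators $k_1,\dots,k_p$ of $K$, so that every $k\in K$ can be written as $k=\sum_{i=1}^p\alpha_i k_i$ with all $\alpha_i\ge 0$. I claim that $T(K)$ is precisely the finitely generated cone with generators $Tk_1,\dots,Tk_p$. For one inclusion, take $w\in T(K)$; then $w=Tk$ for some $k\in K$, and writing $k=\sum_{i=1}^p\alpha_i k_i$ with $\alpha_i\ge 0$ and using linearity of $T$ gives $w=\sum_{i=1}^p\alpha_i (Tk_i)$, a non-negative combination of the vectors $Tk_i$. For the reverse inclusion, any $w=\sum_{i=1}^p\alpha_i(Tk_i)$ with $\alpha_i\ge 0$ equals $T\big(\sum_{i=1}^p\alpha_i k_i\big)$, and $\sum_{i=1}^p\alpha_i k_i\in K$ since $K$ is a convex cone containing its generators, so $w\in T(K)$. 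Non-emptiness of $T(K)$ follows from non-emptiness of $K$; in fact $0=T0\in T(K)$ in any case.

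Having identified $T(K)$ as a finitely generated cone in $\reals^m$, the conclusion is immediate: by the result quoted from \cite{borwein} in the Preliminaries, a finitely generated cone is a polyhedral set, i.e.\ an intersection of finitely many closed half-spaces, and such a set is closed. Convexity of $T(K)$ is clear either from the polyhedral description or directly from its being a cone spanned by finitely many vectors.

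I do not anticipate a genuine obstacle here; the real content of the statement is carried entirely by the already-cited equivalence between ``finitely generated'' and ``polyhedral'', and the argument above merely observes that finite generation is preserved under linear images. The only minor point to be careful about is the degenerate case $K=\{0\}$ (or a generating set that is empty or redundant), which is covered by the remark that $0\in T(K)$, so $T(K)$ is always non-empty and trivially finitely generated.
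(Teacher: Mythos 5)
Your proof is correct and follows essentially the same route as the paper's: you show that $Tk_1,\dots,Tk_p$ generate $T(K)$, so $T(K)$ is finitely generated, hence polyhedral and closed by the cited fact from \cite{borwein}. You merely spell out both inclusions and the degenerate case more explicitly than the paper does.
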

\begin{proof}
$T(K)$ is obviously a non-empty convex cone.
If $k_1,\dots, k_p$ are the generators of $K$, then every element in $T(K)$ is a linear combination of the vectors $T(k_1),\dots, T(k_n)$ with non-negative coefficients. Hence 
$T(k_1),\dots, T(k_n)$ are generators for $T(K)$. Thus, $T(K)$ is a finitely generated cone and therefore it is closed.
\end{proof}

\section{Linear Lyapunov functions}
Consider the linear system
\begin{equation}\label{linear-sys}
{\dot x}=Ax,
\end{equation}
where $x\in \reals^n$ and $A\in {\cal L}(\reals^n)$. Suppose that $C$ is a proper cone in $\reals^n$. A natural question is under what conditions on $A$, the cone $C$ is a forward invariant set for $(\ref{linear-sys})$, i.e. when is $\exponent^{tA}x_0 \in C$ for all $t>0$, whenever $x_0\in C$. The answer to this question is known, see for instance \cite{berman,smith-mini-review} and references therein. We say that $A$ is quasi-monotone for $C$ (QM for short) if 
\begin{equation}\label{QM}
\textrm{Whenever } (x,\lambda)\in \partial C\times C^* \textrm{ is such that } \lambda(x)=0,\textrm{ then } \lambda(Ax)\geq 0.
\end{equation}
Here, $\partial C$ denotes the boundary of $C$. There holds that
\begin{stel} \cite{berman,smith-mini-review} \label{invariance}
Let $C$ be a proper cone in $\reals^n$, and $A\in {\cal L}(\reals^n)$. Then $C$ is a forward invariant set for $(\ref{linear-sys})$ if and only if $A$ is QM for $C$.
\end{stel}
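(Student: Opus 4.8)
The plan is to prove the two implications separately. The forward implication, that forward invariance forces $A$ to be QM, is a one-line differentiation argument; the converse, that QM forces forward invariance, is the substantive part, and I will obtain it by perturbing the vector field so that it points strictly into $C$ along $\partial C$.

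Suppose first that $C$ is forward invariant, and let $(x,\lambda)\in\partial C\times C^*$ satisfy $\lambda(x)=0$. Since $x\in\partial C\subseteq C$, forward invariance gives $\exponent^{tA}x\in C$ for all $t\ge 0$, so the scalar function $g(t)=\lambda(\exponent^{tA}x)$ is non-negative on $[0,\infty)$ and satisfies $g(0)=\lambda(x)=0$. Hence $g'(0)\ge 0$, and since $g'(0)=\lambda(Ax)$, condition $(\ref{QM})$ holds, i.e. $A$ is QM for $C$. (Note this only uses $x\in C$ with $\lambda(x)=0$, not $x\in\partial C$.)

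For the converse, assume $A$ is QM for $C$. Since $C$ is solid, fix a vector $v\in\interior(C)$; by part 2 of Lemma \ref{basics}, $\lambda(v)>0$ for every $\lambda\in C^*\setminus\{0\}$. For $\epsilon>0$ consider the perturbed system $\dot x=Ax+\epsilon v$, whose solution through $x_0$ is $\phi_\epsilon(t)=\exponent^{tA}x_0+\epsilon\int_0^t\exponent^{(t-s)A}v\,ds$. I claim that $\phi_\epsilon(t)\in C$ for all $t\ge 0$ whenever $x_0\in\interior(C)$. To see this, let $T=\sup\{s\ge 0\,|\,\phi_\epsilon([0,s])\subseteq C\}$; since $\interior(C)$ is open, $T>0$, and since $C$ is closed, $\phi_\epsilon([0,T])\subseteq C$. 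Suppose, for contradiction, that $T<\infty$. Then $\phi_\epsilon(T)\in\partial C$: if instead $\phi_\epsilon(T)\in\interior(C)$, openness would keep $\phi_\epsilon$ inside $C$ for times slightly beyond $T$, contradicting the definition of $T$. By part 2 of Lemma \ref{basics} there is $\lambda\in C^*\setminus\{0\}$ with $\lambda(\phi_\epsilon(T))=0$. Then $h(t)=\lambda(\phi_\epsilon(t))$ is non-negative on $[0,T]$ and vanishes at $t=T$, so $h'(T)\le 0$. On the other hand, the pair $(\phi_\epsilon(T),\lambda)\in\partial C\times C^*$ satisfies $\lambda(\phi_\epsilon(T))=0$, so $(\ref{QM})$ yields $\lambda(A\phi_\epsilon(T))\ge 0$, and therefore $h'(T)=\lambda(A\phi_\epsilon(T))+\epsilon\lambda(v)\ge\epsilon\lambda(v)>0$, a contradiction. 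Hence $T=\infty$, proving the claim.

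It remains to remove the perturbation and the interiority assumption. Fixing $t\ge 0$ and $x_0\in\interior(C)$, we have $\phi_\epsilon(t)\to\exponent^{tA}x_0$ as $\epsilon\downarrow 0$, and since each $\phi_\epsilon(t)\in C$ and $C$ is closed, $\exponent^{tA}x_0\in C$. Finally, an arbitrary $x_0\in C$ is the limit of the interior points $x_0+\frac{1}{n}v\in\interior(C)$, so closedness of $C$ gives $\exponent^{tA}x_0=\lim_n\exponent^{tA}(x_0+\frac{1}{n}v)\in C$; thus $C$ is forward invariant. The one genuine difficulty is in the converse implication: QM only controls the vector field on $\partial C$, and a naive first-exit-time argument applied directly to $\dot x=Ax$ stalls because the hyperplane separating an escaped point from $C$ can change along the approaching sequence, preventing one from pinning down a single functional that simultaneously witnesses $\lambda(\phi(T))=0$ and a sign contradiction. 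Introducing $\epsilon v$ with $v\in\interior(C)$ upgrades the weak inequality $\lambda(Ax)\ge 0$ of $(\ref{QM})$ to the strict inequality $h'(T)>0$, which is precisely what rules out the escape; the remaining bookkeeping and the limit $\epsilon\to 0$ are routine.
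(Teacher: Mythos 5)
The paper does not prove Theorem \ref{invariance} at all: it is quoted from the literature with citations to \cite{berman,smith-mini-review}, so there is no in-paper argument to compare yours against. Your proof is correct and is essentially the classical one found in those references: the forward implication by differentiating $t\mapsto\lambda(\exponent^{tA}x)$ at $t=0$, and the converse by the standard subtangentiality trick of perturbing to ${\dot x}=Ax+\epsilon v$ with $v\in\interior(C)$ so that the first-exit-time argument produces the strict inequality $h'(T)\geq\epsilon\lambda(v)>0$ against $h'(T)\leq 0$, followed by letting $\epsilon\downarrow 0$ and using closedness of $C$ twice (once to pass from $\phi_\epsilon$ to $\exponent^{tA}$, once to pass from $\interior(C)$ to $C$). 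All the delicate points are handled: you correctly use part 2 of Lemma \ref{basics} both to produce a nonzero $\lambda\in C^*$ vanishing at the boundary point $\phi_\epsilon(T)$ and to guarantee $\lambda(v)>0$, and your closing remark correctly identifies why the unperturbed first-exit argument does not close on its own. The only step stated without justification is that $x_0+\frac{1}{n}v\in\interior(C)$ for $x_0\in C$ and $v\in\interior(C)$; this is the standard fact that $C+\interior(C)\subseteq\interior(C)$ for a convex cone, and is worth one line.
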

{\bf Examples}: 
It is well-known, see \cite{smith-mini-review}, that when $C=\reals^n_+$, an $n\times n$ matrix $A$ is QM on $C$ if and only if $A_{ij}\geq 0$ for all $i\neq j$.

It was shown in \cite{wolkowicz} that when $C$ is the ice cream cone in $\reals^n$, then $A\in \reals^{n\times n}$ is QM on $C$ if and only if there exists $\alpha \in \reals$ such that:
$$
QA+A^TQ+\alpha Q
$$
is a negative semi-definite matrix. Here, $Q$ is the diagonal matrix with $Q_{ii}=1$ for all $i=1,\dots, n-1$, and $Q_{nn}=-1$. 

Suppose that $n=3$, and let 
$C$ be the ice cream cone $\{x\in \reals^3\, | \,(x_1^2+x_{2}^2)^{1/2} \leq x_3\}$ in $\reals^3$. Suppose that $\epsilon_1$ and $\epsilon_2$ are parameters, and let:
$$
A=\begin{pmatrix}
-\epsilon_1 &-1& 0\\
1& -\epsilon_1 &0\\
0&0& -\epsilon_2
\end{pmatrix}.
$$
Then $A$ is QM on $C$ if and only if
$$
\epsilon_2\leq \epsilon_1.
$$
To see this, note that:
$$
QA+A^TQ+\alpha Q=\begin{pmatrix}
-2\epsilon_1 + \alpha &0&0\\
0&-2\epsilon_1 + \alpha & 0\\
0&0& 2\epsilon_2 -\alpha
\end{pmatrix}
$$
which is negative semi-definite for some $\alpha \in \reals$, provided that $2\epsilon_2 \leq \alpha \leq 2\epsilon_1$, for some $\alpha \in \reals$. But  this is equivalent to $\epsilon_2\leq \epsilon_1$, as claimed.

{\bf Definition}: Let $C$ be a proper cone in $\reals^n$, and suppose that $A\in {\cal L}(\reals^n)$ is QM for $C$. Then $\lambda \in C^*$ is said to be a linear Lyapunov function for $(\ref{linear-sys})$ 
on $C$ if:
\begin{enumerate}
\item $\lambda(c)>0$ for all $c\in C\setminus \{0\}$.
\item $\lambda(Ac)<0$ for all $c\in C\setminus \{0\}$.
\end{enumerate}
It follows readily from Lyapunov's stability Theorem, that if $\lambda$ is a linear Lyapunov function on $C$, then the zero steady state of $(\ref{linear-sys})$ is asymptotically stable with respect to initial conditions in $C$. In fact, below we will show that a stronger conclusion holds. We say that $A\in {\cal L}(\reals^n)$ is Hurwitz if all the eigenvalues of $A$ have negative real part. 
It is well-known that $A$ is Hurwitz if and only if the zero steady state of system $(\ref{linear-sys})$ is asymptotically stable with respect to initial conditions in $\reals^n$.
\begin{stel}\label{L-H}
Let $C$ be a proper cone in $\reals^n$, and suppose that $A\in {\cal L}(\reals^n)$ is QM for $C$.
There exists a linear Lyapunov function for $(\ref{linear-sys})$ on $C$ if and only if the zero steady state of $(\ref{linear-sys})$ is asymptotically stable with respect to all initial conditions in $\reals^n$.
\end{stel}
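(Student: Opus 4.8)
The plan is to prove the two implications separately. The converse implication ($A$ Hurwitz $\Rightarrow$ a linear Lyapunov function exists) I would handle by an explicit construction, averaging a functional from the interior of the dual cone along the flow; the forward implication (a linear Lyapunov function exists $\Rightarrow$ $A$ Hurwitz) I would handle by first establishing asymptotic stability on $C$ and then bootstrapping to all of $\reals^n$ using that $C$ is solid.

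For the converse, assume $A$ is Hurwitz. Since $C$ is pointed, Lemma~\ref{basics}(1) shows that $C^*$ is solid, so fix a vector $w\in\interior(C^*)$. Because $A$ is Hurwitz the integral $\int_0^\infty\exponent^{tA}\,dt=-A^{-1}$ converges, and I would set $\ell(x)=\langle v,x\rangle$ with $v=-(A^{-1})^Tw=\int_0^\infty(\exponent^{tA})^Tw\,dt$, so that $\ell(x)=\int_0^\infty\langle w,\exponent^{tA}x\rangle\,dt$. Three checks remain. (i) $\ell\in C^*$: if $x\in C$ then $\exponent^{tA}x\in C$ by Theorem~\ref{invariance}, hence $\langle w,\exponent^{tA}x\rangle\ge0$ and the integral is $\ge0$. (ii) $\ell(x)>0$ for $x\in C\setminus\{0\}$: then $\exponent^{tA}x\in C\setminus\{0\}$ for every $t$ (the flow is invertible), so by Lemma~\ref{basics}(3) the integrand is continuous and strictly positive, forcing $\ell(x)>0$. (iii) $\ell(Ax)<0$ for $x\in C\setminus\{0\}$: since $\exponent^{tA}$ commutes with $A$, the fundamental theorem of calculus gives $\ell(Ax)=\int_0^\infty\frac{d}{dt}\langle w,\exponent^{tA}x\rangle\,dt=\lim_{t\to\infty}\langle w,\exponent^{tA}x\rangle-\langle w,x\rangle=-\langle w,x\rangle$, which is $<0$ because $w\in\interior(C^*)$ and $x\in C\setminus\{0\}$. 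Hence $\ell$ is a linear Lyapunov function on $C$.

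For the forward implication, assume a linear Lyapunov function $\lambda$ exists; as already noted, this makes the origin asymptotically stable with respect to initial conditions in $C$, and the goal is to promote this to Hurwitz stability. First I would observe that $\lambda$ is radially unbounded on $C$: the set $S^{n-1}\cap C$ is compact and $\lambda$ is strictly positive there, so $\lambda(c)\ge m\|c\|$ for all $c\in C$, where $m=\min_{S^{n-1}\cap C}\lambda>0$. Together with $\frac{d}{dt}\lambda(\exponent^{tA}x_0)=\lambda(A\exponent^{tA}x_0)<0$ off the origin, a standard Lyapunov/LaSalle argument then shows $\exponent^{tA}x_0\to0$ for every $x_0\in C$. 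Finally, solidity of $C$ gives $C-C=\reals^n$: if $y\in\interior(C)$ and $x\in\reals^n$, then $x=\tfrac1\varepsilon(y+\varepsilon x)-\tfrac1\varepsilon y$ with both terms in $C$ for small $\varepsilon>0$; writing an arbitrary initial condition as $x_0=u-v$ with $u,v\in C$ yields $\exponent^{tA}x_0=\exponent^{tA}u-\exponent^{tA}v\to0$, so the origin is globally asymptotically stable on $\reals^n$, i.e.\ $A$ is Hurwitz.

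The step I expect to be the main obstacle is the construction in the converse direction: one must produce a single functional that lies in $C^*$, is strictly positive on all of $C\setminus\{0\}$, and is strictly decreasing along the flow on all of $C\setminus\{0\}$. Membership in $C^*$ comes for free from forward invariance, but the two strict conditions are exactly what dictate choosing $w$ in the \emph{interior} of $C^*$ and invoking Lemma~\ref{basics}(3); reconciling all three simultaneously is the crux. (For the forward implication an alternative to the $C-C=\reals^n$ bootstrap would be to invoke the Perron--Frobenius-type fact that a cone-invariant $A$ attains its spectral abscissa as an eigenvalue with eigenvector $c\in C\setminus\{0\}$, whence $\lambda(Ac)<0$ forces that eigenvalue to be negative; the solidity argument has the advantage of not requiring this.)
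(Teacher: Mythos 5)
Your proof is correct, and both directions rest on the same two pillars as the paper's: the identity $-A^{-1}=\int_0^\infty \exponent^{tA}\,dt$ combined with forward invariance and duality for the Hurwitz-to-Lyapunov direction, and solidity of $C$ for the bootstrap from $C$ to all of $\reals^n$ in the other direction. The execution differs in two places, in each case to your advantage. For sufficiency, the paper only records $(-A^*)^{-1}(C^*)\subseteq C^*$ and then needs a separate open-mapping/case argument to produce a point of $\interior(C^*)$ whose image also lies in $\interior(C^*)$; your integral representation $\ell(x)=\int_0^\infty\langle w,\exponent^{tA}x\rangle\,dt$ shows directly that the image of any $w\in\interior(C^*)$ is already in $\interior(C^*)$ (via Lemma~\ref{basics}(3) and invertibility of the flow), and moreover yields the exact value $\ell(Ax)=-\langle w,x\rangle$, which makes the strict negativity immediate rather than an indirect consequence. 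For necessity, the paper constructs a basis $x_0,\,x_0+\epsilon x_1,\dots$ inside $\interior(C)$ and uses the resulting fundamental matrix, whereas you use the equivalent but slicker observation that $C-C=\reals^n$ for a solid cone and decompose an arbitrary initial condition as a difference of two elements of $C$; both arguments hinge on the same fact, namely that a solid cone spans $\reals^n$ and solutions depend linearly on initial data. The one step you leave as a ``standard Lyapunov/LaSalle argument'' (convergence to $0$ on $C$ given a linear Lyapunov function) is also taken for granted in the paper, and your bound $\lambda(c)\ge m\lVert c\rVert$ on $C$ from compactness of $S^{n-1}\cap C$ is exactly the ingredient needed to make it rigorous.
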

\begin{proof}
{\it Necessity}: Suppose that there exists a linear Lyapunov function for $(\ref{linear-sys})$ on $C$. We need to prove that every solution $x(t)$ of $(\ref{linear-sys})$ in $\reals^n$ converges to $0$ as $t\to \infty$.

Since $C$ is solid, we can pick $x_0\in \interior(C)$. Set $U=\textrm{span}\{x_0\}$. Pick a basis for $U^\perp$, say $x_1,\dots ,x_{n-1}$, and note that 
$x_0,x_1,\dots, x_{n-1}$ is a basis for $\reals^n$ because $\reals^n=U\oplus U^\perp$.  
Since $x_0 \in \interior(C)$, we can 
pick $\epsilon>0$ sufficiently small, such that $x_0+\epsilon x_i\in \interior(C)$ for all $i=1,\dots, n-1$. We claim that 
$x_0,x_0+\epsilon x_1,\dots, x_0 +\epsilon x_{n-1}$ is a basis of $\reals^n$ which is clearly contained in $\interior(C)$. To prove the claim, let $\alpha_0,\dots \, ,\alpha_{n-1}$ be real scalars such that: 
$$
\alpha_0 x_0+\alpha_1(x_0+ \epsilon x_1)+\dots \alpha_{n-1}(x_0+\epsilon x_{n-1})=0,
$$
or equivalently: 
$$
\left(\sum_{i=0}^{n-1} \alpha_i\right) x_0 + \alpha_1 \epsilon x_1+\dots +\alpha_{n-1}\epsilon x_{n-1}=0.
$$
Then as $\epsilon>0$, $\alpha_0=\alpha_1=\dots =\alpha_{n-1}=0$ because $x_0,x_1,\dots, x_{n-1}$ is a basis for $\reals^n$. This proves the claim. 
We can now define a fundamental matrix solution for $(\ref{linear-sys})$ (i.e. an $n\times n$ matrix whose columns are solutions of $(\ref{linear-sys})$ that are linearly independent for all $t$), namely:
$$
X(t)=\left[ x_0(t) \;x_1(t)\, \dots \; x_{n-1}(t)\right].
$$
Here, the columns $x_0(t),x_1(t),\dots , x_{n-1}(t)$ are the unique solutions of $(\ref{linear-sys})$ with respective initial conditions $x_0$, $x_0+\epsilon x_1,\dots, x_0+\epsilon x_{n-1}$. 
By Theorem  $\ref{invariance}$, every solution $x_i(t)$ belongs to $C$ for all $t\geq 0$. And since there is a  linear Lyapunov function for $(\ref{linear-sys})$ on $C$, it follows that 
$\lim_{t\to \infty} x_i(t)=0$ for all $i=0,\dots, n-1$. But every solution of $(\ref{linear-sys})$ on $\reals^n$ is a linear combination of the columns of $X(t)$, and therefore every solution of $\reals^n$ converges to $0$ as well. This concludes the proof of this part of the Theorem.

{\it Sufficiency}: If $A$ is Hurwitz, it follows upon integration  from $t=0$ to $\infty$ of the identity: $d/dt (\exponent^{tA})=A \exponent^{tA}$ for all $t$,  that
$$
-I=A\int_0^{\infty} \exponent^{tA}dt,
$$
and thus that
$$
-A^{-1}=\int_0^{\infty} \exponent^{tA}dt.
$$
Since $A$ is QM on $C$, Theorem $\ref{invariance}$ implies that $-A^{-1}\in {\cal L}(\reals^n)$ satisfies:
$$
-A^{-1}(C) \subseteq C.
$$
Then the dual of $-A^{-1}$, denoted by $(-A^{-1})^*\in {\cal L}((\reals^n)^*)$, and equal to $(-A^*)^{-1}$, satisfies:
$$
(-A^*)^{-1}(C^*) \subseteq C^*.
$$
We claim that:
\begin{equation}\label{essence}
(-A^*)^{-1}(\interior (C^*)) \cap \interior(C^*) \neq \emptyset.
\end{equation}
From $(\ref{essence})$ follows that there exist $\lambda \in \interior (C^*)$ and ${\tilde \lambda} \in \interior(C^*)$ such that $(-A^*)^{-1}{\tilde \lambda}=\lambda$. Therefore, using 
Theorem $\ref{basics}$, there holds that:
\begin{enumerate}
\item $\lambda (c) >0$ for all $c\in C\setminus \{0\}$.
\item $\lambda(Ac)=(A^* \lambda)(c)=-{\tilde \lambda}(c)<0$ for all $c\in C \setminus \{0\}$.
\end{enumerate}
Thus, $\lambda$ is a linear Lyapunov function for $(\ref{linear-sys})$ on $C$.

To prove $(\ref{essence})$, first note that $C^*$ is solid by Lemma $\ref{basics}$ (because $C$ is a proper cone, hence pointed). Pick $c^*\in \interior(C^*)$, and let 
$U\subseteq \interior(C^*)$ be an open set such that $c^*\in U$.  By the Open Mapping  Theorem, 
$(-A^*)^{-1}(U)$ is open in $(\reals^n)^*$, and it is contained in $C^*$ because $(-A^*)^{-1}(C^*) \subseteq C^*$. If $(-A^*)^{-1}(c^*) \in \interior(C^*)$, then 
$(\ref{essence})$ is immediate because $c^*$ belongs to the intersection. If $(-A^*)^{-1}(c^*) \in \partial C^*$, then $(-A^*)^{-1}(U) \cap \interior(C^*)\neq \emptyset$, and $(\ref{essence})$ follows as well. This establishes $(\ref{essence})$, and concludes the proof.
\end{proof}

{\bf Example}: 
Suppose that $n=3$, and let $C$ be the ice cream cone $\{x\in \reals^3\, | \,(x_1^2+x_{2}^2)^{1/2} \leq x_3\}$ in $\reals^3$. 
We have seen in an example above that if
$$
A=\begin{pmatrix}
-\epsilon_1 &-1& 0\\
1& -\epsilon_1 &0\\
0&0& -\epsilon_2
\end{pmatrix},
$$
where $\epsilon_1$ and $\epsilon_2$ are real parameters, then 
$A$ is QM on $C$ if and only if
$$
\epsilon_2\leq \epsilon_1.
$$
Note that $A$ is QM on $C$ and Hurwitz if and only if :
$$
0<\epsilon_2\leq \epsilon_1,
$$
and that in this case, choosing 
$$
\lambda(x)=x_3,
$$
yields that $\lambda(Ax)=-\epsilon_2 x_3$. Thus,  $\lambda$ is a linear Lyapunov function for $(\ref{linear-sys})$ on $C$.

\section{Common linear Lyapunov functions}
Consider a linear time-varying system
\begin{equation}\label{switched-sys}
{\dot x}=A(t)x,
\end{equation}
where $x\in \reals^n$ and $A(t):\reals_+ \to {\cal L}(\reals^n)$ is a piecewise continuous map. 
We shall denote the unique solution at any time $t\geq t_0$, starting in $x_0$ at time $t_0\geq 0$ by $x(t,t_0,x_0)$.

Suppose that $C$ is a proper cone in $\reals^n$. We say that $A(t)$ is quasi-monotone for $C$ (QM for short) if: 
\begin{equation}\label{QM2}
\textrm{For all }t\in \reals_+, \textrm{whenever } (x,\lambda)\in \partial C\times C^* \textrm{ is such that } \lambda(x)=0,\textrm{ then } \lambda(A(t)x)\geq 0.
\end{equation}

There holds that:
\begin{stel} \cite{smith-mini-review} \label{invariance2}
Let $C$ be a proper cone in $\reals^n$, and $A(t): \reals_+ \to {\cal L}(\reals^n)$ a piecewise continuous map. Then for all $x_0 \in C$, the solution $x(t,t_0,x_0)$ of $(\ref{switched-sys})$ belongs to 
$C$ for all $t\geq t_0$, and for all $t_0\geq 0$,  if and only if $A(t)$ is QM for $C$.
\end{stel}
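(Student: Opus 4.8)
The plan is to prove both implications, the nontrivial one being ``QM implies invariance''.

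\emph{Necessity.} Suppose the invariance property holds; fix a point of continuity $\tau$ of $A(\cdot)$ and a pair $(x,\lambda)\in\partial C\times C^*$ with $\lambda(x)=0$. Since $\partial C\subseteq C$, I take $t_0=\tau$ and $x_0=x$: by hypothesis $x(t,\tau,x)\in C$ for all $t\ge\tau$, so $g(t):=\lambda\big(x(t,\tau,x)\big)$ satisfies $g(t)\ge 0$ on $[\tau,\infty)$ and $g(\tau)=\lambda(x)=0$, whence $0\le g'(\tau^+)=\lambda(\dot x(\tau^+))=\lambda(A(\tau)x)$, using continuity of $A$ at $\tau$ so that $\dot x(\tau^+)=A(\tau)x$. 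Hence $(\ref{QM2})$ holds at every continuity point of $A$; since these are dense, the inequality at a discontinuity $\tau$ is recovered either via the usual right-continuity convention for $A$ (so again $\dot x(\tau^+)=A(\tau)x$) or by a limiting argument along continuity points. I would relegate this bookkeeping to a remark.

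\emph{Sufficiency.} Suppose $A(t)$ is QM for $C$, fix $x_0\in C$, $t_0\ge 0$, $T>t_0$. I would use the standard product representation of the state-transition matrix: for a partition $t_0=s_0<s_1<\dots<s_N=T$ of mesh $\delta$,
\[
\Phi_\delta(T,t_0):=\exponent^{(s_N-s_{N-1})A(s_{N-1})}\cdots\exponent^{(s_1-s_0)A(s_0)}\ \xrightarrow{\ \delta\to 0\ }\ \Phi(T,t_0),\qquad \Phi(T,t_0)x_0=x(T,t_0,x_0),
\]
the convergence (in matrix norm) following from Gronwall's inequality applied to the difference between the solutions of $\dot x=A(t)x$ and of $\dot x=A_\delta(t)x$, where $A_\delta$ is the piecewise-constant interpolant and $A_\delta\to A$ uniformly on each subinterval of continuity. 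Each factor $\exponent^{(s_{k+1}-s_k)A(s_k)}$ maps $C$ into $C$ by Theorem~\ref{invariance}, since $A(s_k)$ is QM for $C$ and $s_{k+1}-s_k\ge 0$; hence $\Phi_\delta(T,t_0)(C)\subseteq C$, so in particular $\Phi_\delta(T,t_0)x_0\in C$. Letting $\delta\to 0$ and using that $C$ is closed yields $x(T,t_0,x_0)\in C$; as $T$ is arbitrary, $x(t,t_0,x_0)\in C$ for all $t\ge t_0$.

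The main obstacle is precisely the sufficiency direction. A naive ``first exit time'' argument is inconclusive: at a first time $t_1$ when the trajectory meets $\partial C$, a supporting functional $\lambda\in C^*\setminus\{0\}$ with $\lambda(x(t_1))=0$ yields only $\lambda(A(t_1)x(t_1))\ge 0$ from QM, whereas leaving $C$ would force $\lambda(\dot x(t_1^-))\le 0$ — so QM gives an \emph{equality}, not a contradiction. The product representation sidesteps this by reducing everything to the constant-coefficient Theorem~\ref{invariance}. Should a self-contained argument be preferred, I would instead perturb: pick $e\in\interior(C)$, set $z_\epsilon(t)=x(t)+\epsilon\,\exponent^{\omega(t-t_0)}e$ so that $z_\epsilon(t_0)=x_0+\epsilon e\in\interior(C)$, and show $z_\epsilon(t)\in\interior(C)$ on $[t_0,T]$ for $\omega$ large. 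At a putative first exit $t_1$, the identity $\dot z_\epsilon=A(t)z_\epsilon+\epsilon\,\exponent^{\omega(t-t_0)}(\omega e-A(t)e)$ gives $\lambda(\dot z_\epsilon(t_1^-))\ge \epsilon\,\exponent^{\omega(t_1-t_0)}\big(\omega\lambda(e)-\lambda(A(t_1^-)e)\big)$, which is strictly positive — contradicting $\lambda(\dot z_\epsilon(t_1^-))\le 0$ — once $\omega>M\|e\|/\delta_e$, where $M=\sup_{t\in[t_0,T]}\|A(t)\|$ and $\delta_e:=\min\{\lambda(e):\lambda\in C^*,\ \|\lambda\|=1\}>0$ is positive because $C^*\cap S^{n-1}$ is compact and nonempty (here $C^*\neq\{0\}$ by Lemma~\ref{basics}(1)) and $\lambda\mapsto\lambda(e)$ is continuous and strictly positive on it by Lemma~\ref{basics}(2); then $\epsilon\to 0$ and closedness of $C$ finish the proof. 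In this alternative, securing the uniform constant $\delta_e$ and calibrating $\omega$ is the delicate point.
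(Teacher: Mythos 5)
The paper does not prove this theorem: it is imported verbatim from \cite{smith-mini-review}, so there is no in-house argument to compare yours against. Taken on its own terms, your proof is correct. The sufficiency direction via the product representation of the transition matrix is sound: each factor $\exponent^{(s_{k+1}-s_k)A(s_k)}$ maps $C$ into itself by Theorem~\ref{invariance} because $A(s_k)$ is QM and $s_{k+1}-s_k\geq 0$, the Gronwall estimate gives $\Phi_\delta(T,t_0)\to\Phi(T,t_0)$ (the finitely many discontinuities of $A$ on $[t_0,T]$ contribute arbitrarily little to $\int\|A-A_\delta\|$, and $A_\delta$ is uniformly close to $A$ elsewhere), and closedness of $C$ passes the conclusion to the limit; this cleanly reduces the time-varying case to the autonomous Theorem~\ref{invariance}. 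Your perturbation alternative is essentially the argument found in the cited references for the quasimonotone setting, and your diagnosis of why the naive first-exit argument stalls --- QM only yields $\lambda(\dot x)\geq 0$ at the boundary, not a strict sign --- is exactly the reason the correction term $\epsilon\,\exponent^{\omega(t-t_0)}e$ is needed; the calibration $\omega>M\|e\|/\delta_e$ with $\delta_e=\min\{\lambda(e):\lambda\in C^*,\ \|\lambda\|=1\}>0$ (compactness of $C^*\cap S^{n-1}$ plus Lemma~\ref{basics}) is the right way to make the strict inequality uniform along the trajectory. One caveat, which is a defect of the statement rather than of your proof: in the necessity direction the invariance property genuinely cannot force $(\ref{QM2})$ at an isolated discontinuity of $A$, since altering $A$ on a finite set changes no solution of $(\ref{switched-sys})$; so ``for all $t$'' in $(\ref{QM2})$ is only recoverable under a right-continuity (or similar) convention, exactly as you observe. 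I would state that convention explicitly rather than relegate it to a remark, since without it the ``only if'' direction is literally false.
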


We shall be mainly interested in the behavior of solutions of the system $(\ref{switched-sys})$ in 
the case where $A(t): \reals_+\to \{A_1,\dots, A_m\}$ is an arbitrary piecewise constant map, and $\{A_1,\dots, A_m\}$ is a fixed, finite collection of linear operators on $\reals^n$. 
In the engineering literature, a system of this form is referred to as a switched system \cite{gurvits,valcher}, although strictly speaking we are dealing with a collection of systems, one for each choice of $A(t)$.

Theorem $\ref{invariance2}$ then implies:
\begin{gevolg} Let $C$ be a proper cone in $\reals^n$, and let ${\cal A}=\{A_1,\dots, A_m\}$ be a finite collection of linear operators on $\reals^n$.
Then every solution of every system $(\ref{switched-sys})$, where $A(t):\reals_+ \to {\cal A}$ is an arbitrary piecewise constant map, remains in $C$ for all $t\geq t_0$, for all $t_0\geq 0$, and for all $x_0 \in C$, if and only if $A_i$ is QM for $C$ for all $i=1,\dots, m$.
\end{gevolg}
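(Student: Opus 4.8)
The plan is to derive both implications directly from Theorem~\ref{invariance2}, after observing that a piecewise constant map $A(t):\reals_+\to{\cal A}$ is in particular a piecewise continuous map $\reals_+\to{\cal L}(\reals^n)$, so that Theorem~\ref{invariance2} applies verbatim to every system $(\ref{switched-sys})$ of the type under consideration.

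For the ``if'' direction I would assume that each $A_i$ is QM for $C$ and show that then every admissible $A(t)$ satisfies $(\ref{QM2})$. Fix such a map $A(t)$ and a time $t\in\reals_+$; by definition $A(t)=A_i$ for some index $i$ depending on $t$. If $(x,\lambda)\in\partial C\times C^*$ is such that $\lambda(x)=0$, then $\lambda(A(t)x)=\lambda(A_i x)\geq 0$ precisely because $A_i$ is QM for $C$. Since $t$ was arbitrary, $A(t)$ is QM for $C$, and Theorem~\ref{invariance2} gives that $x(t,t_0,x_0)\in C$ for all $t\geq t_0$, all $t_0\geq 0$, and all $x_0\in C$.

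For the ``only if'' direction I would exploit the fact that, for each $i=1,\dots,m$, the constant map $A(t)\equiv A_i$ is itself an admissible piecewise constant selection from ${\cal A}$. Applying the hypothesis to this constant map shows that every solution of ${\dot x}=A_i x$ issued from $C$ remains in $C$; by Theorem~\ref{invariance} (equivalently, the constant-coefficient instance of Theorem~\ref{invariance2}) this forces $A_i$ to be QM for $C$. As $i$ was arbitrary, every $A_i$ is QM for $C$, which completes the argument.

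I do not expect a substantive obstacle here: the result is essentially a restatement of Theorem~\ref{invariance2} for the particular class of switched coefficient maps. The only point deserving a moment's care is the bookkeeping that a piecewise constant selection from a finite set of operators is a genuine piecewise continuous coefficient map (so Theorem~\ref{invariance2} is applicable), and, conversely, that the constant selections lie among the admissible maps, which is exactly what permits the reduction used in the ``only if'' direction.
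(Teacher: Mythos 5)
Your proposal is correct and matches the paper's approach: the paper offers no separate proof, simply asserting that the corollary follows from Theorem~\ref{invariance2}, and your argument supplies exactly the routine details (piecewise constant selections are piecewise continuous and inherit QM pointwise; constant selections recover the QM property of each $A_i$). Nothing further is needed.
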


{\bf Definition}: Let $A_1,A_2,\dots, A_m \in {\cal L}(\reals^n)$. 
Let $C$ be a proper cone in $\reals^n$, and suppose that $A_i$ is QM for $C$ for all $i=1,\dots, m$. 
Then $\lambda \in C^*$ is said to be a common linear Lyapunov function for $\{A_1,\dots, A_m\}$ on $C$, if:
\begin{enumerate}
\item $\lambda(c)>0$ for all $c\in C\setminus \{0\}$.
\item $\lambda(A_ic)<0$ for all $c\in C\setminus \{0\}$, and all $i=1,\dots, m$.
\end{enumerate}
It follows from Lyapunov's stability Theorem that if $A_1,\dots ,A_m$ have a common linear Lyapunov function on $C$, then the zero steady state of system 
$(\ref{switched-sys})$ where $A(t):\reals_+ \to \{A_1,\dots ,A_m\}$ is an arbitrary piecewise constant map, 
is uniformly asymptotically stable with respect to initial conditions in $C$. A stronger conclusion is as follows:

\begin{stel}\label{common-L}
 Let ${\cal A}=\{A_1,A_2,\dots, A_m\} \subset {\cal L}(\reals^n)$, let $C$ be a proper cone in $\reals^n$, and suppose that $A_i$ is QM for $C$ for all $i=1,\dots, m$.
If there exists a common linear Lyapunov function for ${\cal A}$ on $C$, then the zero steady state of system  $(\ref{switched-sys})$ where $A(t):\reals_+ \to {\cal A}$ is an arbitrary piecewise constant map, is uniformly asymptotically stable with respect to all initial conditions in $\reals^n$. 
\end{stel}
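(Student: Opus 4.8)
The plan is to follow closely the strategy used in the ``Necessity'' part of the proof of Theorem~\ref{L-H}: first establish a uniform exponential decay estimate for solutions that start in the cone $C$, and then bootstrap to all of $\reals^n$ using the fact that $C$ is solid. Since for each fixed switching signal the system is linear, it suffices to produce constants $K\geq 1$ and $\gamma>0$ such that $\|x(t,t_0,x_0)\|\leq K\exponent^{-\gamma(t-t_0)}\|x_0\|$ for every initial condition $x_0$, every $t_0\geq 0$, and — crucially — every admissible piecewise constant $A(\cdot):\reals_+\to{\cal A}$; such a bound gives both uniform stability (take $\delta=\epsilon/K$) and uniform attractivity.

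\emph{Step 1 (initial conditions in $C$).} Fix a common linear Lyapunov function $\lambda\in C^*$ for ${\cal A}$. The set $S^{n-1}\cap C$ is compact and omits the origin, so by continuity and the defining properties of $\lambda$ there exist $a,b>0$ and $\mu>0$ with $a\leq\lambda(c)\leq b$ and $\lambda(A_ic)\leq-\mu$ for all $c\in S^{n-1}\cap C$ and all $i=1,\dots,m$; here finiteness of ${\cal A}$ is what lets a single $\mu$ work. By homogeneity, $\lambda(c)\geq a\|c\|$ and $\lambda(A_ic)\leq-\mu\|c\|\leq-(\mu/b)\lambda(c)$ for all $c\in C$. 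Now let $A(\cdot)$ be any admissible switching signal and $x_0\in C$; by the Corollary following Theorem~\ref{invariance2}, the solution $x(t):=x(t,t_0,x_0)$ stays in $C$ for all $t\geq t_0$. The map $t\mapsto\lambda(x(t))$ is continuous and piecewise $C^1$, and at every non-switching time $\tfrac{d}{dt}\lambda(x(t))=\lambda(A(t)x(t))\leq-(\mu/b)\lambda(x(t))$; integrating piece by piece and using continuity at the switching times gives $\lambda(x(t))\leq\exponent^{-(\mu/b)(t-t_0)}\lambda(x_0)$. Combined with $a\|x(t)\|\leq\lambda(x(t))$ and $\lambda(x_0)\leq b\|x_0\|$, this yields $\|x(t,t_0,x_0)\|\leq(b/a)\exponent^{-(\mu/b)(t-t_0)}\|x_0\|$ for all $x_0\in C$, with constants independent of the switching signal and of $t_0$.

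\emph{Step 2 (all of $\reals^n$).} Because $C$ is solid, one can — exactly as in the proof of Theorem~\ref{L-H} — choose a basis $v_1,\dots,v_n$ of $\reals^n$ with each $v_j\in\interior(C)\subseteq C$, and there is a constant $M>0$ depending only on this basis so that every $x_0\in\reals^n$ can be written $x_0=\sum_{j=1}^n\alpha_jv_j$ with $|\alpha_j|\leq M\|x_0\|$. Fix an admissible switching signal; for that fixed signal the system is linear time-varying, so $x(t,t_0,x_0)=\sum_{j=1}^n\alpha_j\,x(t,t_0,v_j)$, and each $x(t,t_0,v_j)$ lies in $C$ for $t\geq t_0$. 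Applying Step 1 to each term gives $\|x(t,t_0,x_0)\|\leq\big(nM(b/a)\max_j\|v_j\|\big)\exponent^{-(\mu/b)(t-t_0)}\|x_0\|$, a bound of the required form with constants independent of $t_0$ and of the switching signal; hence the zero solution is uniformly (indeed exponentially) asymptotically stable with respect to all initial conditions in $\reals^n$.

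The only point requiring care is uniformity over the infinitely many switching signals and over $t_0$, and this is precisely what the common-Lyapunov-function argument delivers, since the contraction rate $\mu/b$ does not depend on which $A_i$ is currently active. A secondary subtlety is that, although the notion of solution depends on the switching signal, for each fixed signal the solution still depends linearly on $x_0$, which is what legitimizes the passage from $C$ to $\reals^n$ through a basis contained in $\interior(C)$.
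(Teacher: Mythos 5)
Your proof is correct and follows essentially the same route the paper intends: the paper's proof of this theorem is a one-line reference to the \emph{Necessity} argument of Theorem~\ref{L-H}, i.e.\ decay along the cone via the Lyapunov function combined with a basis of $\reals^n$ contained in $\interior(C)$ and linearity of the solution map for each fixed switching signal. Your version is in fact more complete, since the explicit exponential bound with constants independent of the switching signal and of $t_0$ is exactly what is needed to justify the word ``uniformly,'' a point the paper's sketch leaves implicit.
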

\begin{proof}
The proof is similar to the {\it Necessity} part of the proof of Theorem $\ref{L-H}$.
\end{proof}

The converse statement in Theorem $\ref{common-L}$ does not hold, as the following example shows:

{\bf Example}: Let $C=\reals^2_+$, and 
$$
A_1=\begin{pmatrix}
-1& 0\\
1&-1
\end{pmatrix},\textrm{ and } 
A_2=\begin{pmatrix}
-1& 1\\ 
0& -1 \end{pmatrix}.
$$
Note that $A_1$ and $A_2$ are QM on $C$, and both are Hurwitz. Therefore, 
Theorem 3.2 in \cite{gurvits} establishes that the zero steady state of  system $(\ref{switched-sys})$ where $A(t):\reals_+ \to \{A_1,A_2\}$ is an arbitrary piecewise constant map, 
is uniformly asymptotically stable with respect to all initial conditions in $\reals^2$ if and only if $A_1A_2^{-1}$ has no negative eigenvalues. Here, 
$$
A_1A_2^{-1}=\begin{pmatrix}
-1& 0\\
1&-1
\end{pmatrix}
\begin{pmatrix}
-1&-1\\
0&-1
\end{pmatrix}=
\begin{pmatrix} 
1& 1\\
-1&0
\end{pmatrix}
$$ 
and this matrix has no negative eigenvalues (in fact, it has no real eigenvalues). 
But there is no common linear Lyapunov function for $\{A_1,A_2\}$ on $C$. Indeed, suppose that $(v_1,v_2) \in \interior(C^*)= \interior(\reals^2_+)$ (recall that $C=\reals^2_+$ is self-dual), is such that
\begin{eqnarray*}
(v_1, v_2)A_1&=&(-v_1+v_2,-v_2)\in -\interior(\reals^2_+),\textrm{ and }\\
(v_1, v_2)A_2&=&(-v_1, v_1-v_2)\in -\interior(\reals^2_+).
\end{eqnarray*}
In particular, $v_1-v_2<0$ and $v_2-v_1<0$ must hold simultaneously, which is impossible.

\section{When does a common Lyapunov function exist?}
Theorems $\ref{L-H}$ and $\ref{common-L}$  motivate the search for conditions that characterize when a finite collection of linear operators that are QM on a cone, share a common Lyapunov function.

We shall consider the $(m+1)$-fold Cartesian product of $\reals^n$ with itself, $\reals^n\times \dots \times \reals^n$, and denote it as $(\reals^n)^{m+1}$. For any subset $X$ of 
$\reals^n$, the notation $X^{m+1}$ is defined similarly. For a given collection of linear operators $A_1,A_2,\dots ,A_m$ in ${\cal L}(\reals^n)$, we consider the map $T\in {\cal L}((\reals^n)^{m+1},\reals^n)$ defined by 
$$
T(x_0,x_2,\dots,x_m)=x_0-A_1x_1-A_2x_2-\dots-A_mx_m,
$$
for each $(x_0,x_1,\dots,x_m)\in (\reals^n)^{m+1}$. Then we have that:
\begin{stel} \label{general-cone}
Let $C$ be a proper cone in $\reals^n$, and $A_1,A_2,\dots, A_m\in {\cal L}(\reals^n)$ be QM on $C$. If 
\begin{equation}\label{sufficient}
\textrm{Ker}(T)\cap C^{m+1} =\{0\},
\end{equation}
then $A_1,A_2,\dots,A_m$ have a common linear Lyapunov function on $C$.
\end{stel}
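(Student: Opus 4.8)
The plan is to prove the statement by exhibiting the image cone $K:=T\!\left(C^{m+1}\right)\subseteq\reals^n$ as a proper cone, and then showing that any $\lambda$ in the interior of its dual cone $K^*$ is a common linear Lyapunov function for $\{A_1,\dots,A_m\}$ on $C$. So the whole argument runs through the structural facts about cones collected in Lemma \ref{basics} and Lemma \ref{closed}.

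First I would check that $C^{m+1}\subseteq(\reals^n)^{m+1}$ is itself a proper cone (it is closed and convex as a product; solid since $\interior(C^{m+1})=(\interior C)^{m+1}$; pointed since $C$ is), so that hypothesis $(\ref{sufficient})$ is precisely the kernel condition of Lemma \ref{closed}; hence $K$ is a non-empty closed convex cone. Solidity of $K$ is then immediate, because $T(x_0,0,\dots,0)=x_0$ shows $C\subseteq K$ and $C$ is solid. For pointedness of $K$: if $y\in K$ and $-y\in K$, write $y=T(u)$ and $-y=T(v)$ with $u,v\in C^{m+1}$; then $u+v\in\textrm{Ker}(T)\cap C^{m+1}=\{0\}$ by $(\ref{sufficient})$, so $v=-u$, and since $C^{m+1}$ is pointed, $u=0$ and $y=0$. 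Thus $K$ is a proper cone.

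Next, since $K$ is proper, Lemma \ref{basics}(1) gives that $K^*$ is solid, so I can pick $\lambda\in\interior(K^*)$; by Lemma \ref{basics}(3) this means $\lambda(y)>0$ for every $y\in K\setminus\{0\}$. I claim this $\lambda$ works. Property (1) in the definition of a common linear Lyapunov function is immediate because $C\setminus\{0\}\subseteq K\setminus\{0\}$ (in particular $\lambda\in C^*$). For property (2), observe that feeding $T$ a tuple with $c$ in one of the entries $x_1,\dots,x_m$ — say the $i$-th — and zeros elsewhere yields $T(0,\dots,0,c,0,\dots,0)=-A_ic$, so $-A_i(C)\subseteq K$; moreover $(\ref{sufficient})$ forces $A_ic\neq 0$ whenever $c\in C\setminus\{0\}$, since otherwise that same nonzero tuple would lie in $\textrm{Ker}(T)\cap C^{m+1}$. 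Hence for $c\in C\setminus\{0\}$ we get $-A_ic\in K\setminus\{0\}$, so $\lambda(-A_ic)>0$, i.e. $\lambda(A_ic)<0$. Therefore $\lambda$ is a common linear Lyapunov function for $\{A_1,\dots,A_m\}$ on $C$.

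The only step requiring genuine insight is choosing $K=T(C^{m+1})$ as the working object: condition $(\ref{sufficient})$ is tailored so that Lemma \ref{closed} applies and so that $K$ is pointed, and then the inclusions $C\subseteq K$ and $-A_i(C)\subseteq K$, together with Lemma \ref{basics}, finish the job; I would also remark that quasi-monotonicity of the $A_i$ is not actually used here — it is assumed only so that the conclusion conforms to the definition of a common linear Lyapunov function and so that Theorem \ref{common-L} can afterwards be invoked. An equivalent, slightly more computational route mirrors the {\it Sufficiency} part of the proof of Theorem \ref{L-H}: dualizing, $\textrm{Ker}(T)^\perp$ is the image of $T^*$, with $T^*\lambda=(\lambda,-A_1^*\lambda,\dots,-A_m^*\lambda)$, and $(\ref{sufficient})$ — via a quotient-by-$\textrm{Ker}(T)$ argument producing a functional that is strictly positive on $C^{m+1}\setminus\{0\}$ — yields a point of the image of $T^*$ lying in $\interior\!\left((C^*)^{m+1}\right)$, which unwinds to the same $\lambda$; the image-cone approach above is more economical given the lemmas already in hand.
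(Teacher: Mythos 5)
Your proof is correct, and it reaches the same destination as the paper --- a functional in the interior of the dual of the cone $C-\bigl(A_1(C)+\dots+A_m(C)\bigr)$ --- but by a noticeably different decomposition. The paper never names $T(C^{m+1})$ as a single object: it separately establishes that $-(A_1(C)+\dots+A_m(C))$ is closed, pointed, and meets $C$ only at $0$, then invokes Lemma~\ref{dual} twice, first to get pointedness of the sum $C-\bigl(A_1(C)+\dots+A_m(C)\bigr)$ and then to rewrite its dual as $C^*\cap(-A_1(C))^*\cap\dots\cap(-A_m(C))^*$, finally characterizing the interior of each factor via Lemma~\ref{basics}. You instead treat $K=T(C^{m+1})$ (which is exactly that sum) as one cone, get closedness in a single application of Lemma~\ref{closed} under hypothesis $(\ref{sufficient})$, prove pointedness directly from $(\ref{sufficient})$, and read off both Lyapunov properties from the inclusions $C\subseteq K$ and $-A_i(C)\subseteq K$ together with Lemma~\ref{basics}(3) applied to $K$ itself --- Lemma~\ref{dual} is never needed. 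Your route buys a small but real gain in rigor: the paper deduces closedness of $C-\bigl(A_1(C)+\dots+A_m(C)\bigr)$ from closedness of the summands, yet a sum of closed cones need not be closed (the paper's own example in Section 2 makes this point), whereas your single application of Lemma~\ref{closed} to $T$ and $C^{m+1}$ settles closedness of the whole cone at once. The paper's decomposition, in exchange, isolates the three conditions that reappear verbatim as the necessary-and-sufficient criteria of Theorem~\ref{finitely-generated cone}, so it sets up the next result more directly. Your side remarks are also accurate: quasi-monotonicity of the $A_i$ plays no role in this argument, and your observation that $(\ref{sufficient})$ forces $A_ic\neq 0$ for $c\in C\setminus\{0\}$ is the same point the paper makes when it notes $\textrm{Ker}(-A_i)\cap C=\{0\}$.
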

\begin{proof}
If $(\ref{sufficient})$ holds then we claim that:
\begin{enumerate}
\item $-(A_1(C)+A_2(C)+\dots + A_m(C))$ is a closed convex cone.
\item $C\cap \left(A_1(C)+A_2(C)+\dots + A_m(C)\right)=\{0\}$.
\item $-(A_1(C)+A_2(C)+\dots + A_m(C))$ is pointed.
\end{enumerate}
1. follows from Lemma \ref{closed} because $C^{m+1}$ is a nonempty closed convex cone in $(\reals^n)^{m+1}$.  
To prove 2., pick $c_0\in C\cap \left(A_1(C)+A_2(C)+\dots + A_m(C)\right)$. Then for all $i=1,\dots, m$, there exist $c_i\in C$  such that $c_0=A_1c_1+\dots A_mc_m$, and thus 
$(c_0,c_1,\dots, c_m)\in \textrm{Ker}(T)\cap C^{m+1} =\{0\}$, which implies that $c_0=0$. To prove 3., it suffices to prove that 
$A_1(C)+A_2(C)+\dots + A_m(C)$ is pointed. Suppose that $x\in A_1(C)+A_2(C)+\dots + A_m(C)$, such that 
$-x \in A_1(C)+A_2(C)+\dots + A_m(C)$. Then for all $i=1,\dots, m$ there exist $c_i\in C$ and ${\tilde c}_i\in C$ such that:
\begin{eqnarray*}
x&=&A_1c_1+\dots A_mc_m\\
-x&=&A_1{\tilde c}_1+\dots A_m{\tilde c_m}
\end{eqnarray*}
and therefore
$$
0+A_1(c_1+{\tilde c}_1)+\dots A_m(c_m +{\tilde c}_m)=0.
$$
Since $C$ is a convex cone, this implies that $(0,c_1+{\tilde c}_1,\dots, c_m+{\tilde c}_m)\in \textrm{Ker}(T)\cap C^{m+1}=\{0\}$, and thus both $c_i\in C$ and $-c_i \in C$ for all $i=1,\dots, m$. 
Since $C$ is pointed, it follows that $c_i={\tilde c}_i=0$ for all $i=1,\dots, m$, and therefore that $x=0$.

From 1.,2, and 3. and  Lemma $\ref{dual}$ follows that 
$$
C-\left(A_1(C)+A_2(C)+\dots + A_m(C)\right)
$$
is a closed, pointed convex cone, hence its dual cone, which by Lemma $\ref{dual}$ equals 
$$
C^*\cap(-A_1(C))^*\cap \dots (-A_m(C))^*
$$
is solid, or equivalently that 
\begin{equation}\label{nonvoid}
\interior(C^*)\cap \interior((-A_1(C))^*)\cap \dots \cap \interior((-A_m(C))^*)\neq \emptyset.
\end{equation}
Notice that $(\ref{sufficient})$ implies that
$$
\textrm{Ker}(-A_i)\cap C=\{0\}\textrm{ for all } i=1,\dots, m,
$$
and thus $-A_i(C)$ is a closed convex cone for all $i=1,\dots,m$ by Lemma $\ref{closed}$. Then from Lemma $\ref{basics}$ follows that for all $i=1,\dots, m$:
\begin{eqnarray*}
\interior((-A_i(C))^*)&=&\{\lambda \in  (\reals^n)^*\,| \, \lambda(x)>0\textrm{ for all } x\in -A_i(C)\setminus \{0\}\} \\
&=& \{\lambda \in  (\reals^n)^*\, |\, \lambda(-A_ic)>0\textrm{ for all }c\in C\setminus \{0\}\}
\end{eqnarray*}
which together with $(\ref{nonvoid})$ implies that $A_1,\dots,A_m$ have a common Lyapunov function on $C$. 
\end{proof}
When  $C$ is a finitely generated proper cone in $\reals^n$, we have a necessary and sufficient condition for the existence of a linear common Lyapunov function on $C$:
\begin{stel} \label{finitely-generated cone}
Let $C$ be a finitely generated proper cone in $\reals^n$, and $A_1,A_2,\dots, A_m\in {\cal L}(\reals^n)$ be QM on $C$. 

Then $A_1,A_2,\dots,A_m$ have a common linear Lyapunov function on $C$ if and only if the following conditions hold:
\begin{enumerate}
\item $\textrm{Ker}(A_i)\cap C=\{0\}$ for all $i=1,\dots,m$.
\item $A_1(C)+A_2(C)+\dots +A_m(C)$ is pointed.
\item $C\cap (A_1(C)+A_2(C)+\dots +A_m(C))=\{0\}$.
\end{enumerate}
\end{stel}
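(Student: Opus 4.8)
The plan is to reduce both implications to the single hypothesis $\textrm{Ker}(T)\cap C^{m+1}=\{0\}$ that already appears in Theorem \ref{general-cone}. Concretely, I would first prove the elementary equivalence
\[
\textrm{Ker}(T)\cap C^{m+1}=\{0\}\quad\Longleftrightarrow\quad\text{conditions 1, 2 and 3 all hold,}
\]
and then invoke Theorem \ref{general-cone} for the ``if'' part and a short direct computation with the Lyapunov functional for the ``only if'' part. For the equivalence, the implication ``$\Leftarrow$'' is obtained by feeding suitable tuples into $\textrm{Ker}(T)\cap C^{m+1}$: the tuple whose $i$-th component equals a given $c\in C\cap\textrm{Ker}(A_i)$ and whose other components all vanish yields condition 1; the tuple $(c_0,c_1,\dots,c_m)$ arising from a representation $c_0=A_1c_1+\dots+A_mc_m$ of an element $c_0\in C\cap\big(A_1(C)+\dots+A_m(C)\big)$ yields condition 3; and, using that $C$ is a pointed convex cone, the tuple $(0,c_1+\tilde c_1,\dots,c_m+\tilde c_m)$ built from two representations $x=\sum_i A_ic_i=-\sum_i A_i\tilde c_i$ of a vector witnessing non-pointedness yields condition 2. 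For the implication ``$\Rightarrow$'', take $(c_0,c_1,\dots,c_m)\in\textrm{Ker}(T)\cap C^{m+1}$: condition 3 forces $c_0=0$, hence $A_1c_1+\dots+A_mc_m=0$; then for each $j$ one has both $A_jc_j\in A_1(C)+\dots+A_m(C)$ and $-A_jc_j=\sum_{i\neq j}A_ic_i\in A_1(C)+\dots+A_m(C)$ (using $0\in A_i(C)$ for every $i$), so condition 2 yields $A_jc_j=0$, and condition 1 then yields $c_j=0$.

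With the equivalence in hand, the ``if'' direction of the theorem is immediate: conditions 1--3 give $\textrm{Ker}(T)\cap C^{m+1}=\{0\}$, and Theorem \ref{general-cone} produces a common linear Lyapunov function for $A_1,\dots,A_m$ on $C$. Here the hypothesis that $C$ is finitely generated is convenient rather than essential: by Lemma \ref{closed2} each $A_i(C)$ and the sum $A_1(C)+\dots+A_m(C)$ are finitely generated, hence automatically closed convex cones, so conditions 2 and 3 are stated for genuinely polyhedral objects and the dual-cone bookkeeping of Lemma \ref{dual} applies without the extra closedness argument performed inside the proof of Theorem \ref{general-cone}.

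For the ``only if'' direction I would show directly that a common linear Lyapunov function $\lambda\in C^*$ forces $\textrm{Ker}(T)\cap C^{m+1}=\{0\}$, after which conditions 1--3 follow from the equivalence. Indeed, if $c_0,c_1,\dots,c_m\in C$ satisfy $c_0=A_1c_1+\dots+A_mc_m$, applying $\lambda$ gives $\lambda(c_0)=\sum_{i=1}^m\lambda(A_ic_i)$. The left side is $\geq 0$ because $\lambda\in C^*$; each summand on the right is $\leq 0$, being $<0$ when $c_i\neq 0$ and $0$ when $c_i=0$. Hence $\lambda(c_0)=0$ and $\lambda(A_ic_i)=0$ for every $i$, and the strict sign properties 1 and 2 in the definition of a common linear Lyapunov function force $c_0=0$ and $c_i=0$ for all $i$.

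I expect the only slightly delicate point to be the ``$\Rightarrow$'' half of the equivalence, and within it the ``peeling'' step where pointedness of $A_1(C)+\dots+A_m(C)$ is used to conclude $A_jc_j=0$ for each $j$ separately: one must be careful that $-A_jc_j$ really lies in the sum $A_1(C)+\dots+A_m(C)$, which holds precisely because $0$ lies in every $A_i(C)$. Everything else is routine manipulation of the definitions.
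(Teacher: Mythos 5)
Your proof is correct, and its sufficiency half coincides with the paper's: conditions 1--3 imply $\textrm{Ker}(T)\cap C^{m+1}=\{0\}$ by the same peeling argument (condition 3 kills $c_0$, then pointedness of the sum kills each $A_jc_j$, then condition 1 kills each $c_j$), after which Theorem \ref{general-cone} applies. Where you genuinely diverge is in the necessity half. The paper dualizes: it shows $\lambda$ lies in $\interior(C^*)\cap\interior((-A_1(C))^*)\cap\dots\cap\interior((-A_m(C))^*)$, concludes that the dual of $C-(A_1(C)+\dots+A_m(C))$ is solid, hence that this cone is pointed, and then invokes Lemmas \ref{basics} and \ref{dual}; this route needs each $-A_i(C)$ and the sum to be closed, which is exactly where Lemma \ref{closed2} and the finite-generation hypothesis enter. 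You instead prove directly that a common linear Lyapunov function forces $\textrm{Ker}(T)\cap C^{m+1}=\{0\}$ by a sign count ($\lambda(c_0)\geq 0$ while $\sum_i\lambda(A_ic_i)\leq 0$, with strict inequality somewhere unless everything vanishes), and then transport this back to conditions 1--3 through the elementary equivalence you set up at the start. This is shorter, avoids all the closedness and dual-cone bookkeeping, and --- as you half-observe --- actually shows the theorem holds for an \emph{arbitrary} proper cone, since neither your necessity argument, nor the equivalence, nor Theorem \ref{general-cone} uses finite generation; that hypothesis is needed only for the paper's particular necessity proof. One cosmetic slip: you labeled the two halves of your displayed equivalence backwards (the ``feeding tuples into the kernel'' argument proves that the kernel condition \emph{implies} conditions 1--3, i.e.\ the forward implication as displayed), but the mathematics of both halves is right.
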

\begin{proof}
{\it Sufficiency}: We will verify that $(\ref{sufficient})$ holds. Let $(c_0,c_1,\dots,c_m)\in \textrm{Ker}(T)\cap C^{m+1}$. Then
$$
c_0-A_1c_1-A_2c_2-\dots -A_mc_m=0, \textrm{ and } c_i\in C\textrm{ for all }i=0,1,\dots,m.
$$
Then $c_0=0$ by 3., and thus: 
\begin{equation}\label{sum}
0=A_1c_1+A_2c_2+\dots +A_mc_m.
\end{equation}
If $c_i\neq 0$ for some $i\in \{1,\dots,m\}$, then $Ac_i\neq 0$ by 1. Moreover, by $(\ref{sum})$:
$$
-A_ic_i=\sum_{j\neq i}^m A_jc_j \in A_1(C)+A_2(C)+\dots +A_m(C).
$$
Thus, $A_ic_i$ and $-A_ic_i$ are non-zero and belong to $A_1(C)+A_2(C)+\dots +A_m(C)$, contradicting 2. Thus $c_i=0$ for all $i=1,\dots, m$, and then this part of the proof is concluded by applying Theorem $\ref{general-cone}$.

{\it Necessity}: Suppose that $\lambda \in C^*$ is a common linear Lyapunov function for $A_1,A_2,\dots, A_m$ on $C$. Then
\begin{equation}\label{lyap}
\lambda(c)>0\textrm{ and } \lambda(-A_ic)>0\textrm{ for all }c\in C\setminus \{0\} \textrm{ and all } i=1,\dots, m.
\end{equation}
Then 1. must hold, for if it did not, there would exist some $i\in \{1,\dots,m\}$ and some $c\in C\setminus \{0\}$ such that $A_ic=0$, whence $\lambda (-A_ic)=0$, a contradiction. Note that since $1.$ holds, $(\ref{lyap})$ is equivalent to the statement that:
$$
\lambda \in \interior(C^*)\cap \left(\cap_{i=1}^m \{\lambda \in (\reals^n)^* \,| \, \lambda(-A_ic) >0\textrm{ for all } c\in C\setminus \{0\}:-A_ic\neq 0\} \right)
$$
Since $-A_i(C)$ is a non-empty closed (by Lemma $\ref{closed2}$) convex cone for all $i=1,\dots , m$, Lemma $\ref{basics}$ implies that the latter is equivalent to:
\begin{eqnarray*}
\lambda &\in& \interior(C^*)\cap \interior((-A_1(C))^*)\cap \dots \cap \interior((-A_m(C))^*)\\
&=&\interior \left(C^*\cap (-A_1(C))^*\cap \dots \cap (-A_m(C))^* \right),
\end{eqnarray*}
and thus $C^*\cap (-A_1(C))^*\cap \dots \cap (-A_m(C))^*=\left(C-(A_1(C)+\dots +A_m(C)) \right)^*$ (by Lemma $\ref{dual}$) is a closed  solid cone. 
Note that it is the dual of the closed cone $C-(A_1(C)+\dots +A_m(C))$ (closedness follows from Lemma $\ref{closed2}$), which in turn must be 
pointed (by Lemma $\ref{basics}$). Lemma $\ref{dual}$ then implies that 2. and 3. hold, concluding this part of the proof. 
\end{proof}
In the special case where $C=\reals^n_+$, and $A_1,\dots, A_m$ are QM on $\reals^n_+$, different characterizations for the existence of a 
common linear Lyapunov function for $A_1,\dots, A_m$ on $\reals^n_+$ can be found in \cite{valcher}.

{\bf Examples}: We shall first provide some examples that show that no pair of the three conditions in Theorem $\ref{finitely-generated cone}$ implies the third, indicating that these conditions are sharp for the existence of a common linear Lyapunov function when the cone is finitely generated.

When $C=\reals^2_+$, the matrices
$$
A_1=\begin{pmatrix}
-1& 0\\
2&1
\end{pmatrix}\textrm{ and } 
A_2=\begin{pmatrix}
1& 2\\ 
0& -1 \end{pmatrix}.
$$
are QM for $C$ and invertible. Thus, 1. holds. Since
$$
A_1(C)+A_2(C)\textrm{ is finitely generated by } \begin{pmatrix}-1\\2 \end{pmatrix}\textrm{ and }\begin{pmatrix}2\\-1 \end{pmatrix},
$$
it is a pointed cone, so 2. holds as well. However, 3. fails because $\begin{pmatrix}1\\1 \end{pmatrix}$ is contained in the intersection of $A_1(C)+A_2(C)$ and $C$. Thus $A_1$ and $A_2$ do not share a common linear Lyapunov function on $C$.

The matrices
$$
B_1=B_2=\begin{pmatrix}
-1&1\\
1&-1
\end{pmatrix}
$$
are QM for $C$. Here 1., fails because $\begin{pmatrix}1\\1 \end{pmatrix}$ is contained in $\textrm{Ker}(B_1)$ and in $C$, although 
$$
B_1(C)+B_2(C) =\textrm{span}\bigg\{ \begin{pmatrix}-1\\1 \end{pmatrix} \bigg\},
$$ 
is a pointed cone which intersects $C$ only in $0$, hence 2. and 3. hold. $B_1$ and $B_2$ do not share a common linear Lyapunov function on $C$.

When $C=\reals^2_+$, the matrices
$$
E_1=\begin{pmatrix}
-1&0\\
1&-1
\end{pmatrix}\textrm{ and } E_2=\begin{pmatrix}-1&1\\0&-1 \end{pmatrix}
$$
are QM for $C$, and they are invertible. Thus 1. holds. Note that 
$$
E_1(C)+E_2(C)= \{x\in \reals^2\, | \, x_1+x_2 \leq 0\}.
$$ 
Although $E_1(C)+E_2(C)$ only intersects $C$ in $0$ (so that 3. holds), this cone is not pointed, so 2. fails.

To end on a positive note, we give an example where a common linear Lyapunov function does exist on $C=\reals^2_+$. Let
$$
F_1=\begin{pmatrix}
-2&0\\
1&-1
\end{pmatrix}\textrm{ and } 
F_2=
\begin{pmatrix}
-1&1\\
0&-2
\end{pmatrix},
$$
which are QM for $C$, and invertible. Thus 1. holds. Moreover, 
$$
F_1(C)+F_2(C)\textrm{ is finitely generated by } \begin{pmatrix}-2\\1 \end{pmatrix} \textrm{ and } \begin{pmatrix}1\\-2 \end{pmatrix},
$$
and it is a pointed cone, which only intersects $C$ in 0. Thus, 2. and 3. hold as well, and therefore $F_1$ and $F_2$ share a common linear Lyapunov function. For instance, 
$\lambda(x)=x_1+x_2$ is easily seen to be a common linear Lyapunov function on $C$.

\section{Diffusively coupled systems}
Here we return to the motivating question raised in the Introduction.

Let $C$ be a proper cone in $\reals^n$, and $\{A_1,\dots, A_m\}\in {\cal L}(\reals^n)$ be QM for $C$. 
For all $i,j$ in $\{1,\dots, m\}$ with $i\neq j$, assume that $D_{ij}\in {\cal L}(\reals^n)$ and $D_{ij}=D_{ji}$.

We now define the coupled system on $(\reals^n)^m$:
\begin{eqnarray}
{\dot x}_1&=&A_1x_1 + \sum_{j\neq 1}D_{1j}(x_j-x_1)\label{coupled1}\\
{\dot x}_2&=&A_2x_2 + \sum_{j\neq 2}D_{2j}(x_j-x_2)\\
&\vdots& \nonumber \\
{\dot x}_m&=&A_mx_m + \sum_{j\neq m}D_{mj}(x_j-x_m)\label{coupledm}
\end{eqnarray}
Note that $C^m$ is a proper cone in $(\reals^n)^m$, and that its dual $(C^m)^*$ can be identified with $(C^*)^m$ thanks to the Riesz Representation Theorem.  
It is natural to ask when the proper cone $C^m$ in $(\reals^n)^m$ is a forward invariant set for  $(\ref{coupled1})-(\ref{coupledm})$. To answer this question, we introduce the following concept:

{\bf Definition} Let $C$ be a proper cone in $\reals^n$, and suppose that for all $i,j$ in $\{1,\dots, m\}$ with $i\neq j$, 
$D_{ij}\in {\cal L}(\reals^n)$ and $D_{ij}=D_{ji}$. We say that the collection $\{D_{ij}\}$ acts diffusively on $C$, provided that for all $i\neq j$:
\begin{enumerate}
\item 
$D_{ij}(C)\subseteq C$.
\item
Whenever $(x,\lambda)\in (\partial C,C^*)$ is such that $\lambda(x)=0$, then $\lambda(D_{ij}x)=0$.
\end{enumerate}
Note that for a given cone $C$, and fixed $m$, there always exist nontrivial families $\{D_{ij}\}$ that act diffusively on $C$. Indeed, if $D_{ij}=\alpha_{ij}I$ for some arbitrary $\alpha_{ij}=\alpha_{ji}\geq 0$, then the family $\{D_{ij}\}$ acts diffusively on $C$. When $C=\reals^n_+$, any family $\{D_{ij}\}$ consisting of diagonal matrices with non-negative entries, also acts diffusively on $C$.  In fact, it is not difficult to see that in this case, diagonal matrices with only non-negative entries are the only matrices that can belong to any family $\{D_{ij}\}$ that acts diffusively on $\reals^n_+$.

{\bf Notation}: For future reference, we let ${\cal D}_m$ be the (nonempty) set whose elements are all possible families 
$\{D_{ij}\}$ of linear operators on $\reals^n$ with $i,j$ in $\{1,\dots, m\}$ and $i \neq j$ such that 
$D_{ij}=D_{ji}$, that act diffusively on a given proper cone $C$ in $\reals^n$.

For example, when $C=\reals^n_+$, the set ${\cal D}_m$ is the set of diagonal matrices having only non-negative entries.

The following result remains valid even when the symmetry assumption $D_{ij}=D_{ji}$ is dropped, as it is never used in the proof.
\begin{stel}\label{coupled-invariance}
Let $C$ be a proper cone in $\reals^n$, $\{A_1,\dots, A_m\}\in {\cal L}(\reals^n)$ be QM for $C$, and $\{D_{ij}\} \in {\cal D}_m$.
Then $C^m$ is a forward invariant set for $(\ref{coupled1})-(\ref{coupledm})$.
\end{stel}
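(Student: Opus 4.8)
The plan is to recast the coupled system $(\ref{coupled1})$--$(\ref{coupledm})$ as a single linear system $\dot X=\mathbf{A}X$ on $(\reals^n)^m$, where $X=(x_1,\dots,x_m)$ and the $i$-th block of $\mathbf{A}X$ is $\bigl(A_i-\sum_{j\neq i}D_{ij}\bigr)x_i+\sum_{j\neq i}D_{ij}x_j$. Since $C^m$ is a proper cone in $(\reals^n)^m$ and $\mathbf{A}\in{\cal L}((\reals^n)^m)$, Theorem $\ref{invariance}$ reduces the claim to verifying that $\mathbf{A}$ is QM for $C^m$.

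To that end I would fix $(X,\Lambda)\in\partial(C^m)\times(C^m)^*$ with $\Lambda(X)=0$, and, using the identification $(C^m)^*=(C^*)^m$ noted in the excerpt, write $\Lambda=(\lambda_1,\dots,\lambda_m)$ with each $\lambda_i\in C^*$, so that $\Lambda(X)=\sum_{i=1}^m\lambda_i(x_i)$. Because $x_i\in C$ and $\lambda_i\in C^*$, each summand is non-negative, hence $\lambda_i(x_i)=0$ for every $i$. The structural observation driving the proof is that, for each $i$, the equality $\lambda_i(x_i)=0$ forces one of three alternatives: $x_i=0$, or $\lambda_i=0$, or $x_i\in\partial C$ (the last by Lemma $\ref{basics}$(2): $\lambda_i\in C^*\setminus\{0\}$ vanishes at $x_i$, so $x_i\notin\interior(C)$, and $x_i\in C$ is closed). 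In each of these three cases one has both $\lambda_i(A_ix_i)\ge 0$ — trivially in the first two, and by the QM property of $A_i$ in the third — and $\lambda_i(D_{ij}x_i)=0$ for every $j\neq i$ — trivially in the first two, and by condition 2 in the definition of $\{D_{ij}\}$ acting diffusively in the third.

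With this in hand I would expand $\Lambda(\mathbf{A}X)=\sum_{i}\lambda_i(A_ix_i)+\sum_{i}\sum_{j\neq i}\bigl(\lambda_i(D_{ij}x_j)-\lambda_i(D_{ij}x_i)\bigr)$. The first sum is $\ge 0$ by the observation above. In the double sum, $\lambda_i(D_{ij}x_i)=0$ by the observation, while $\lambda_i(D_{ij}x_j)\ge 0$ since $D_{ij}x_j\in C$ (because $D_{ij}(C)\subseteq C$ and $x_j\in C$) and $\lambda_i\in C^*$. Hence $\Lambda(\mathbf{A}X)\ge 0$, so $\mathbf{A}$ is QM for $C^m$, and Theorem $\ref{invariance}$ yields the forward invariance of $C^m$.

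I do not expect a serious obstacle here; the only point requiring care is the bookkeeping of the degenerate cases $x_i=0$ and $\lambda_i=0$, in which the QM and diffusivity hypotheses — stated only for boundary points $x\in\partial C$ — do not literally apply, but the needed (in)equalities hold for trivial reasons. Everything else is a direct computation, and, as the statement notes, the symmetry $D_{ij}=D_{ji}$ is never invoked.
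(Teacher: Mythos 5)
Your proposal is correct and follows essentially the same route as the paper: both reduce the claim via Theorem \ref{invariance} to showing the block operator $A_D$ is QM for $C^m$, deduce $\lambda_i(x_i)=0$ for each $i$ from $\Lambda(X)=0$, and then use the QM property of each $A_i$ together with the two defining conditions of diffusive action to conclude $\Lambda(A_DX)\ge 0$. Your slightly more explicit handling of the degenerate alternatives $x_i=0$ and $\lambda_i=0$ is a harmless refinement of the paper's case split $(\ref{help2})$.
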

\begin{proof}
We need to verify that the following linear operator on $(\reals^n)^m$:
\begin{equation}\label{coupled-matrix}
A_D=\begin{pmatrix}
A_1-\sum_{j\neq 1} D_{1j}& D_{12}&\dots &D_{1m}\\
D_{21}&A_2-\sum_{j\neq 2} D_{2j}& \dots  &D_{2m}\\
\vdots&\vdots &\ddots &\vdots \\
D_{m1}& D_{m2}&\dots & A_m- \sum_{j\neq m} D_{mj}
\end{pmatrix}
\end{equation}
is QM for $C^m$. Let $X=(x_1,\dots ,x_m)\in \partial C^m$ and $\Lambda=(\lambda_1,\dots, \lambda_m)\in (C^m)^*$ be such that $\Lambda(X)=0$. We are left with showing that $\Lambda (A_DX)\geq 0$.

Since $(C^m)^*=(C^*)^m$, it follows that $\lambda_i\in C^*$ for all $i=1,\dots, m$.  Thus, $\lambda_i(x_i)\geq 0$ for all $i=1,\dots, m$, but since $\Lambda(X)=\lambda_1(x_1)+\dots + \lambda_m(x_m)=0$, there follows that:
\begin{equation}\label{help1}
\lambda_i(x_i)=0\textrm{ for all } i=1,\dots ,m,
\end{equation}
and then Lemma $\ref{basics}$ implies that:
\begin{equation}\label{help2}
\textrm{ For all } i=1,\dots ,m:\textrm{ either } x_i\in \partial C,\textrm{ or if } x_i\notin \partial C \textrm{ then } \lambda_i=0.
\end{equation}
But $A_i$ is QM for $C$ for all $i=1,\dots ,m$, hence:
\begin{equation} \label{help3}
\lambda_i(A_ix_i)\geq 0\textrm{ for all }i=1,\dots ,m.
\end{equation}
Then, as $\{D_{ij}\}$ acts diffusively on $C$, and using $(\ref{help1})$, $(\ref{help2})$ and $(\ref{help3})$:
\begin{eqnarray*}
\Lambda(A_D X)&=&\lambda_1\left( (A_1-\sum_{j\neq 1}D_{1j})x_1 + D_{12}x_2+\dots +D_{1m}x_m\right)+\dots +\\
&&\lambda_m\left( D_{m1}x_1+D_{m2}x_2+\dots + (A_m-\sum_{j\neq m} D_{mj})x_m \right)\\
&=& \left(\sum_{i=1}^m \lambda_i(A_ix_i)\right) + \left(\sum_{i=1}^m \sum_{j\neq i}^m\lambda_i(D_{ij}x_j)\right)-0\\
&\geq& 0,
\end{eqnarray*}
which concludes the proof.
\end{proof}

\begin{stel}\label{coupled-suff}
Let $C$ be a proper cone in $\reals^n$, $\{A_1,\dots, A_m\}\in {\cal L}(\reals^n)$ be QM for $C$.
If $A_1,\dots, A_m$ have a common linear Lyapunov function on $C$, then for all $\{D_{ij}\} \in {\cal D}_m$ 
the zero steady state of $(\ref{coupled1})-(\ref{coupledm})$ is asymptotically stable with respect to all initial conditions in $(\reals^n)^m$.
\end{stel}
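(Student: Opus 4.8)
The plan is to show that the single linear operator $A_D$ of $(\ref{coupled-matrix})$, which governs the coupled dynamics, inherits a linear Lyapunov function on the cone $C^m$ from the common linear Lyapunov function $\lambda$ for $\{A_1,\dots,A_m\}$ on $C$; then Theorem $\ref{L-H}$ applied to the proper cone $C^m$ in $(\reals^n)^m$ and the operator $A_D$ yields that $A_D$ is Hurwitz, which is exactly asymptotic stability of the zero steady state with respect to all initial conditions in $(\reals^n)^m$. The point of routing through Theorem $\ref{L-H}$ rather than a direct Lyapunov argument is that it upgrades stability on the cone to stability on the whole space, and it requires only that $A_D$ be QM on $C^m$, which is precisely the content of Theorem $\ref{coupled-invariance}$.

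First I would record that $A_D$ is QM for $C^m$ by Theorem $\ref{coupled-invariance}$, so that Theorem $\ref{L-H}$ is applicable. Next I would propose the candidate functional $\Lambda=(\lambda,\lambda,\dots,\lambda)\in (C^m)^*=(C^*)^m$, i.e. $\Lambda(x_1,\dots,x_m)=\sum_{i=1}^m \lambda(x_i)$. Positivity on $C^m\setminus\{0\}$ is immediate: if $(x_1,\dots,x_m)\in C^m$ is nonzero then some $x_i\in C\setminus\{0\}$, so $\lambda(x_i)>0$ while the remaining terms are $\geq 0$. The crux is the negativity condition $\Lambda(A_D X)<0$ for all $X=(x_1,\dots,x_m)\in C^m\setminus\{0\}$. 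Expanding as in the proof of Theorem $\ref{coupled-invariance}$,
$$
\Lambda(A_D X)=\sum_{i=1}^m \lambda(A_i x_i) + \sum_{i=1}^m\sum_{j\neq i}\lambda(D_{ij}x_j) - \sum_{i=1}^m\sum_{j\neq i}\lambda(D_{ij}x_i).
$$
Here I would use the symmetry $D_{ij}=D_{ji}$ to reindex the first double sum: $\sum_{i}\sum_{j\neq i}\lambda(D_{ij}x_j)=\sum_{j}\sum_{i\neq j}\lambda(D_{ji}x_j)=\sum_{i}\sum_{j\neq i}\lambda(D_{ij}x_i)$, so the two double sums cancel exactly and $\Lambda(A_D X)=\sum_{i=1}^m\lambda(A_i x_i)$. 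Since $\{D_{ij}\}$ acts diffusively, $D_{ij}(C)\subseteq C$ and $\lambda\in C^*$, every crossed term was $\geq 0$; but the cancellation makes that moot. Now because $X\neq 0$, at least one $x_{i_0}\in C\setminus\{0\}$, whence $\lambda(A_{i_0}x_{i_0})<0$, while for every other index $\lambda(A_i x_i)\leq 0$ by continuity/limit of condition 2 of the definition of common linear Lyapunov function (or: if $x_i=0$ the term is $0$, and if $x_i\neq 0$ it is $<0$). Hence $\Lambda(A_D X)<0$, so $\Lambda$ is a linear Lyapunov function for $A_D$ on $C^m$.

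The main obstacle is the cancellation of the coupling terms, which is exactly where the symmetry $D_{ij}=D_{ji}$ is used — note this is in contrast to Theorem $\ref{coupled-invariance}$, whose proof never needed symmetry. Without symmetry the crossed terms need not cancel, and although diffusive action guarantees $\lambda(D_{ij}x_j)\geq 0$, the subtracted terms $\lambda(D_{ij}x_i)$ could dominate and spoil negativity; so the argument genuinely relies on the structure of diffusive coupling plus symmetry, not merely on invariance of $C^m$. A minor point to handle carefully is that condition 2 in the definition of a common linear Lyapunov function gives strict negativity only on $C\setminus\{0\}$; one must observe that $\lambda(A_i x_i)=0$ when $x_i=0$ and $<0$ otherwise, so the total sum over $i$ is strictly negative as soon as one component is nonzero, which holds for every $X\in C^m\setminus\{0\}$. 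With $\Lambda$ established as a linear Lyapunov function for the QM operator $A_D$ on the proper cone $C^m$, Theorem $\ref{L-H}$ gives asymptotic stability on all of $(\reals^n)^m$, completing the proof.
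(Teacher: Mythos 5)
Your proposal is correct and follows essentially the same route as the paper: the same candidate $\Lambda(X)=\sum_i\lambda(x_i)$, cancellation of the coupling terms via the symmetry $D_{ij}=D_{ji}$, and an appeal to Theorem \ref{L-H} on the proper cone $C^m$ (with QM-ness of $A_D$ supplied by Theorem \ref{coupled-invariance}). Your explicit reindexing of the double sums and your remark that $\lambda(A_ix_i)=0$ when $x_i=0$ but the total is still strictly negative are just careful spellings-out of steps the paper leaves implicit.
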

\begin{proof}
Fix $\{D_{ij}\}\in {\cal D}$. Let $\lambda \in C^*$ be a common linear Lyapunov function for $A_1,\dots, A_m$ on $C$, and define $\Lambda \in (C^m)^*=(C^*)^m$ as follows:
$$
\Lambda(X)=\lambda(x_1)+\dots +\lambda(x_m),\textrm{ for all } X=(x_1,\dots, x_m)\in C^m.
$$
We claim that $\Lambda$ is a linear Lyapunov function for system $(\ref{coupled1})-(\ref{coupledm})$ on $C^m$. Indeed,
$$
\Lambda(X) >0\textrm{ for all } X \in C^m\setminus \{0\},
$$
because when $X\in C^m\setminus \{0\}$, there exists at least one $x_i\in C\setminus \{0\}$ and for which $\lambda(x_i)>0$. 
Moreover, using the notation in $(\ref{coupled-matrix})$, we have that:
\begin{eqnarray*}
\Lambda(A_DX)&=&\lambda \left( (A_1-\sum_{j\neq 1}D_{1j})x_1 + D_{12}x_2+\dots +D_{1m}x_m\right)+\dots +\\
&&\lambda \left( D_{m1}x_1+D_{m2}x_2+\dots + (A_m-\sum_{j\neq m} D_{mj})x_m \right)\\
&=&\sum_{i=1}^m \lambda(A_ix_i) <0,\textrm{ for all } X\in C^m\setminus \{0\},
\end{eqnarray*}
where we used the symmetry $D_{ij}=D_{ji}$, and the fact that $\lambda$ is a common linear Lyapunov function on $C$ for $A_1,\dots, A_m$. This establishes the claim, and the conclusion now follows from Theorem $\ref{L-H}$.
\end{proof}
{\bf Example}: Let $C=\{x\in \reals^3 \, |\, (x_1^2+x_2^2)^{1/2}\leq x_3\}$ be the ice cream cone in $\reals^3$. Pick two distinct matrices $A_1$ and $A_2$ from the following family:
$$
\Bigg\{\begin{pmatrix}-\epsilon_1& -1& 0\\
1&-\epsilon_1& 0\\
0&0& -\epsilon_2
 \end{pmatrix},\textrm{ where } 0<\epsilon_2\leq \epsilon_1 \Bigg \}.
$$
We have seen that $A_1$ and $A_2$ are QM on $C$, and that they share a common Lyapunov function $\lambda(x)=x_3$ on $C$. Let $D_{12}=D_{21}=dI$, where $d\geq 0$ is arbitrary. We have seen that the family $\{D_{12}\}$ acts diffusively on $C$. Thus, by Theorem $\ref{coupled-suff}$, every solution of the system:
\begin{eqnarray*}
{\dot x}_1&=&A_1x_1+D_{12}(x_2-x_1)\\
{\dot x}_2&=&A_2x_2 + D_{21}(x_2-x_1)
\end{eqnarray*}
in $\reals^6$ converges to the zero steady state. 

Physically, we can think of two ice cream cones filled with water which are being emptied by gravity via their vertex in the origin. When 
there is no water exchange between the cones ($d=0$), the exponential rates at which the height of the water columns decrease is given by the two respective parameters $\epsilon_2$ of the matrices $A_1$ and $A_2$. The two parameters $\epsilon_1$ control the rate at which water particles spiral towards the symmetry axes of the cones. This happens with the same 
frequency $1$ in both cones. When a coupling term is present, ($d>0$) water is exchanged between the two cones at rate $d$, making them communicating vessels. The stability result above confirms among other things the intuition that the two cones will still be emptied eventually, independently of the rate of exchange of water between the cones. In fact, the total height of the two water columns is decreasing, and serves as a Lyapunov function for the coupled system.

{\bf Example}: We show that the converse of Theorem $\ref{coupled-suff}$ is not true.

Let $C=\reals^2_+$, and 
$$
A_1=\begin{pmatrix}
-1& 0\\
1&-1
\end{pmatrix},
A_2=\begin{pmatrix}
-1& 1\\ 
0& -1 \end{pmatrix}, D=\begin{pmatrix} d_1&0 \\ 0&d_2 \end{pmatrix}\textrm{ where } d_1,d_2\geq 0 \textrm{ are arbitrary}.
$$
We have seen that $A_1$ and $A_2$ are QM on $C$, but that they don't share a common linear Lyapunov function on $C$. We will show that the zero solution of
\begin{eqnarray}
{\dot x}_1&=&A_1x_1+D(x_2-x_1) \label{s1}\\
{\dot x}_2&=&A_2x_2+D(x_1-x_2) \label{s2}
\end{eqnarray}
is asymptotically stable in $(\reals^2)^2=\reals^4$ for all matrices $D$.

Note first that for every $D$, the matrix:
$$
{\cal A}(D)=
\begin{pmatrix}
A_1-D&D\\
D&A_2-D
\end{pmatrix}
$$
is QM on $(\reals^2_+)^2=\reals^4_+$ by Theorem $\ref{coupled-invariance}$. By the Perron-Frobenius Theorem \cite{berman} follows that ${\cal A}(D)$ has a real, principal eigenvalue 
$\lambda_p(D)$ (i.e. $|\lambda|<\lambda_p(D)$ for every eigenvalue $\lambda\neq \lambda_p(D)$ of ${\cal A}(D)$). Since $A_1$ and $A_2$ are Hurwitz it is clear that $\lambda_p(0)<0$. Moreover, $\lambda_p(D)$ is continuous in $D$. We claim that $\lambda_p(D)<0$ for all $D$. To see this, it suffices to show that the determinant of ${\cal A}(D)$ is positive for all $D$, and by using the fact that $A_1-D$ is invertible, we observe that:
\begin{eqnarray*}
\textrm{det}({\cal A}(D))&=&\textrm{det}\begin{pmatrix}
A_1-D&D\\
D&A_2-D
\end{pmatrix}\\
&=&\textrm{det}(A_1-D)\textrm{det}((A_2-D)-D(A_1-D)^{-1}D)
\end{eqnarray*}
Here we used the well-known identity that
$$
\textrm{det}\begin{pmatrix}P&Q\\R&S \end{pmatrix}=\textrm{det}(P)\textrm{det}(S-RP^{-1}Q),
$$
for all $n\times n$ matrices $P,Q,R$ and $S$ with invertible $P$, which is easily proved by observing that the following factorization always holds:
$$
\begin{pmatrix}P&Q\\R&S \end{pmatrix}=\begin{pmatrix}P&0\\R &I \end{pmatrix}\begin{pmatrix}I& P^{-1}Q \\ 0 & S-RP^{-1}Q \end{pmatrix}
$$
Therefore, 
\begin{eqnarray}
&&\textrm{det}({\cal A}(D))=\textrm{det}(A_1-D)\textrm{det}((A_2-D)-D(A_1-D)^{-1}D) \nonumber \\
&=&(1+d_1)(1+d_2)\textrm{det}\Bigg( \begin{pmatrix}-(1+d_1)&1 \\ 0& -(1+d_2) \end{pmatrix}-\begin{pmatrix} -\frac{d_1^2}{1+d_1}& 0\\
-\frac{d_1d_2}{(1+d_1)(1+d_2)} & -\frac{d_2^2}{(1+d_2)}\end{pmatrix}  \Bigg) \nonumber \\
&=&(1+d_1)(1+d_2) \textrm{det}\begin{pmatrix}\frac{d_1^2-(1+d_1)^2}{(1+d_1)}&1\\ \frac{d_1d_2}{(1+d_1)(1+d_2)}& \frac{d_2^2-(1+d_2)^2}{(1+d_2)} \end{pmatrix}\nonumber \\
&=&\textrm{det}\begin{pmatrix}
-(2d_1+1)& 1\\ d_1d_2 & -(2d_2+1)
\end{pmatrix} \nonumber \\
&=&
3d_1d_2+2(d_1+d_2)+1 \nonumber \\
&>&0,\textrm{ for all } d_1,d_2\geq 0. \label{contra}
\end{eqnarray}
Now if the zero solution of $(\ref{s1})-(\ref{s2})$ would not be asymptotically stable on $\reals^4$ for all $D$, then ${\cal A}(D)$ would not be Hurwitz for some matrix $D$.
Then there would exist some matrix ${\tilde D}$ such that 
$\lambda_p({\tilde D})=0$.
But  then $\textrm{det}({\cal A}({\tilde D}))=0$, contradicting $(\ref{contra})$.

\newpage

\newpage


\begin{thebibliography}{199}

\bibitem{berman} A. Berman, M. Neumann, and R.J. Stern, 
Nonnegative matrices in dynamics systems, John Wiley \& Sons, New York, 1989.

\bibitem{borwein} J.M. Borwein, and W.B. Moors, 
Stability of closedness of convex cones under linear mappings, Journal of Convex Analysis 16(3), p.699-705, 2009.

\bibitem{boyd} S. Boyd, and L. Vandenberghe,
Convex optimization, Cambridge University Press, 2004.

\bibitem{valcher} E. Fornasini, and M.E. Valcher,
Linear copositive Lyapunov functions for continuous-time positive switched systems, 
IEEE Transactions on Automatic Control 55 (8), p. 1933-1937, vol. 55, 2010.


\bibitem{gurvits} L. Gurvits, R. Shorten, and O. Mason, 
On the Stability of Switched Positive Linear Systems, IEEE Transactions on Automatic Control 52 (6), p.1099-1103, 2007.

\bibitem{hale} J.K. Hale, 
Diffusive Coupling, Dissipation, and Synchronization,
 Journal of Dynamics and Differential Equations 9 (1), p. 1-52, 1999.

\bibitem{smith-mini-review} M.W. Hirsch, and H.L. Smith, 
Competitive and Cooperative Systems: a mini-review, Proceedings of the First Multidisciplinary Symposium on Positive Systems (POSTA 2003). Luca Benvenuti, Alberto De Santis and Lorenzo Farina (Eds.) Lecture Notes on Control and Information Sciences vol. 294, Springer Verlag, Heidelberg, 2003.

\bibitem{turing} A.M. Turing, 
The Chemical Basis of Morphogenesis, Philosophical Transactions of the Royal Society of London B. 237 (641), p. 37-72, 1952.

\bibitem{wolkowicz} R.J. Stern, and H. Wolkowicz, 
Exponential nonnegativity on the ice cream cone, 
SIAM Journal on Matrix Analysis and Applications 12 (1), p. 160-165, 1991.



\end{thebibliography}
\end{document}